\documentclass[10pt,letter]{article}

\usepackage[total={6in, 9in}]{geometry}

\usepackage{amsmath}
\usepackage{amssymb}
\usepackage{amsthm}
\usepackage{booktabs}
\usepackage {verbatim}
\usepackage{epsfig,psfrag}
\usepackage{algorithm}
\usepackage{algorithmic}
\usepackage{url}
\usepackage{float}
\usepackage{mathtools}
\usepackage{graphicx}
\usepackage[caption=false]{subfig}
\usepackage{enumerate}
\usepackage{textgreek}
\usepackage{bbm}
\usepackage{comment}
\usepackage{multirow}

\allowdisplaybreaks

\newcommand{\beq}{\begin{equation}}
\newcommand{\eeq}{\end{equation}}
\newcommand{\bal}{\begin{aligned}}
\newcommand{\eal}{\end{aligned}}

\newcommand{\setC}{{\cal C}}

\newcommand{\setU}{{\cal U}}
\newcommand{\setK}{{\cal K}}
\newcommand{\setV}{{\cal V}}

\newcommand{\tA}{\widetilde{A}}
\newcommand{\tComplState}{\widetilde{\ComplState}}
\newcommand{\tStateCompl}{\widetilde{\StateCompl}}
\newcommand{\tComplCompl}{\widetilde{\ComplCompl}}
\newcommand{\tP}{\widetilde{P}}

\newcommand{\hatQ}{\widehat{Q}}

\newcommand{\Sphere}{\mathcal{S}}
\newcommand{\R}{\mathbb{R}}
\newcommand{\Sy}{\mathbb{S}}
\newcommand{\Z}{\mathbb{Z}}
\newcommand{\bfP}{\mathbf{P}}
\newcommand{\bfM}{\mathbf{M}}

\newcommand{\dlcs}{DLCS}
\newcommand{\ComplState}{C}

\newcommand{\StateCompl}{D}

\newcommand{\ComplCompl}{F}

\newcommand{\lcp}[2]{\text{LCP}(#1,#2)}
\newcommand{\sol}[2]{\text{SOL}(#1,#2)}
\newcommand{\gr}{\text{Gr}}

\newcommand{\tx}{\widetilde{x}}

\newcommand{\dirlambda}{\lambda'}
\newcommand{\hatlambda}{\widehat{\lambda}}

\newcommand{\mysub}[2]{{\left[#1\right]}_{#2}}

\newcommand{\conv}{\textnormal{conv}}

\newcommand{\STAB}[1]{\begin{tabular}{@{}c@{}}#1\end{tabular}}
\newcommand{\tstep}{h}
\newcommand{\tAthetah}{\tA_{\theta,h}}

\newcommand{\xtraj}{x^{\mathrm{lcs}}}
\newcommand{\lamtraj}{\lambda^{\mathrm{lcs}}}

\newcommand{\exclude}[1]{}

\newtheorem{lemma}{Lemma}
\newtheorem{theorem}{Theorem}
\newtheorem{definition}{Definition}

\newtheorem{proposition}{Proposition}


\title{Stability Analysis of Discrete-Time Linear Complementarity Systems}

\author{Arvind U. Raghunathan\thanks{Mitsubishi Electric Research Laboratories
        (\texttt{raghunathan@merl.com})}.        
        \and Jeffrey T. Linderoth\thanks{Department of Industrial and Systems Engineering,
        University of Wisconsin-Madison
        (\texttt{linderoth@wisc.edu}).}}

\usepackage{amsopn}

\begin{document}

\maketitle

\begin{abstract}
A Discrete-Time Linear Complementarity System (\dlcs) is a dynamical system in discrete time whose state evolution is governed by linear dynamics in states and algebraic variables that solve a Linear Complementarity Problem (LCP). The \dlcs\, is the hybrid dynamical system that is the discrete-time counterpart of the well-known Linear Complementarity System (LCS).  We derive sufficient conditions for Lyapunov stability of a \dlcs\, when using a quadratic Lyapunov function that depends only on the state variables and a quadratic Lyapunov function that depends both on the state and the algebraic variables.  The sufficient conditions require checking the feasibility of a copositive program over nonconvex cones.  Our results only assume that the LCP is solvable and do not require the solutions to be unique.  We devise a novel, exact cutting plane algorithm for the verification of stability and the computation of the Lyapunov functions.  To the best of our knowledge, our algorithm is the first exact approach for stability verification of \dlcs. A number of numerical examples are presented to illustrate the approach.  Though our main object of study in this paper is the \dlcs, the proposed algorithm can be readily applied to the stability verification of LCS.  In this context, we show the equivalence between the stability of a LCS and the \dlcs\, resulting from a time-stepping procedure applied to the LCS for all sufficiently small time steps.
\end{abstract}



\section{Introduction}

A Discrete-Time Linear Complementarity System (\dlcs) is linear dynamical system in a discrete time where the state evolution is governed by \emph{complementarity} constraints.  Mathematically, a \dlcs\, is written as
\begin{subequations}
	\begin{align}
		x_{k+1} =&\; Ax_k + \ComplState\lambda_{k+1} \label{dlcs:dyn}\\
		0 \leq \lambda_{k+1} \perp&\; \StateCompl x_{k} + \ComplCompl 
		\lambda_{k+1} \geq 0, \label{dlcs:compl}
	\end{align}\label{dlcs}
\end{subequations}
where $x_k \in \R^{n_x}$ is the state of the system at the time-step $k$ and \emph{complementarity variables} $\lambda_k \in \R^{n_c}$ are the algebraic variables satisfying complementarity constraints in~\eqref{dlcs:compl}. The dimensions of the matrices in~\eqref{dlcs} are $A \in \R^{n_x \times n_x}$, $\ComplState \in \R^{n_x \times n_c}$, $\StateCompl \in \R^{n_c \times n_x}$, and $\ComplCompl \in \R^{n_c \times n_c}$.  The complementarity constraints~\eqref{dlcs:compl} distinguish the \dlcs\, from a standard linear dynamical system. A \dlcs\, can be naturally derived by time sampling a Linear Complementarity System (LCS)~\cite{HeemelsThesis2000, HeemelsSchumacher2000, CamlibelPangShen2006}.

A LCS is a continuous-time linear dynamical systems governed by \emph{complementarity} constraints and can be written mathematically as follows:
\begin{subequations}
	\begin{align}
		\frac{dx}{dt} =&\; \tA x(t) + \tComplState \lambda(t) \label{lcs:dyn}\\
		0 \leq \lambda(t) \perp&\; \tStateCompl x(t) + \tComplCompl \lambda(t) \geq 0, \label{lcs:compl}
	\end{align}\label{lcs}
\end{subequations}
where the matrices $\tA, \tComplState, \tStateCompl, \tComplCompl$ have conformal dimensions.  An appropriate time-stepping scheme such as \emph{explicit} or \emph{implicit} Euler~\cite{PangStewart2008} can be shown to yield the \dlcs~\eqref{dlcs}. 
A number of authors have studied time-stepping methods for \eqref{lcs} and established sufficient conditions for convergence of numerical methods \cite{CamlibelHeemels2003,PangStewart2008,HanTiwariCamlibel2009}.
For example, Pang and Stewart~\cite[\S 3]{PangStewart2008} propose simulating a LCS using time-stepping formulations of the form 
\begin{subequations}
	\begin{align}
		x_{k+1} =&\; x_{k} + \tstep (\tA x_{k+\theta} + \tComplState \lambda_{k+1} )\label{tslcs:dyn}\\
		0 \leq \lambda_{k+1} \perp&\; \tStateCompl x_{k+1} + \tComplCompl \lambda_{k+1} \geq 0, \label{tslcs:compl}
	\end{align}\label{tslcs}
\end{subequations}
where $\tstep$ is the time-step, $\theta \in [0,1]$ and $x_{k+\theta} = (1-\theta)x_k + \theta x_{k+1}$.  The dynamics of the time-stepping formulations can be transformed to a \dlcs\, by rearranging~\eqref{tslcs:dyn} as
\begin{subequations}
	\begin{align}
		x_{k+1} =&\; (I_{n_x} + \tstep \tAthetah^{-1}\tA) x_{k} + \tstep \tAthetah^{-1} \tComplState \lambda_{k+1} \label{tslcs1:dyn}\\
		0 \leq \lambda_{k+1} \perp&\; \tStateCompl x_{k+1} + \tComplCompl \lambda_{k+1} \geq 0 \label{tslcs1:compl}
	\end{align}\label{tslcs1}
\end{subequations}
where $\tAthetah = (I_{n_x} - \theta \tstep \tA)$.  Specifically, substituting for $x_{k+1}$ in~\eqref{tslcs1:compl} using~\eqref{tslcs1:dyn}, we obtain the \dlcs\,~\eqref{dlcs}. The use of $\theta = 0,\frac{1}{2},1$ results in the explicit Euler, trapezoidal, or implicit Euler time-stepping schemes, respectively.  Convergence of the solutions of the time-stepping formulations~\eqref{tslcs} as $\tstep \rightarrow 0$ to a solution of the LCS~\eqref{lcs} are provided in~\cite{PangStewart2008}.

In dynamical systems, the discrete-time sampled variant plays an important role in implementing control algorithms in practice.  For example, discrete-time control algorithms, such as model predictive control (MPC)~\cite{Mayne2000}, rely on the time-stepping formulations of continuous-time linear dynamical systems.  In an analogous manner, the \dlcs\, naturally arises in applications that can be modeled with LCS.  The modeling and study of dynamical systems with complementarity conditions has been steadily increasing, with application in  electrical circuit simulation, robotics, nonsmooth mechanics, economics, bioengineering  \cite{AcaryBrogliatoBook2008, Brogliato2003, CamlibelThesis2001,  CamlibelHeemels2003,  NagurneyZhang1996, PangShen2007, RaghunathanDVI2006, Schumacher2004, VanSchaftSchumacher1998, VanSchaftSchumacher2000, VascaIannelli2009}.  Another important recent application of interest is robotics, where complementarity is required to model the friction forces that arise in contact-based manipulation tasks~\cite{KennethPilwon2017, ManchesterKuindersma2017, MordatchTodorov2012, PatelShield2019, posa_thesis, Posa14}.  

Stability is a fundamental issue in dynamical systems, and stability verification by computing stability certificates is a key to guaranteeing performance and safety in real-world systems.  Lyapunov stability is a widely used concept for analyzing the stability of dynamical systems~\cite{khalil2002nonlinear}, and its extension to hybrid and switched systems has been considered by several authors~\cite{Branicky1998, DecarloBranicky2000, Hespanha2004, Hespanha2005, JohanssonRantzer1998, Liberzon2003, Shorten2007}.  The  papers extending stability to hybrid systems assume that a Lyapunov-like function exists for each mode’s vector field and for the entire state space. In many hybrid and switched systems, however, each mode is active only over a subset of the state space, especially for those systems whose switchings are triggered by state evolution, such as in the LCS and \dlcs. Hence, the above results are rather restrictive, even for linear switched systems. To rectify this, {\c C}amlibel and Schumacher~\cite{CamlibelSchumacher2004} proposed copositive Lyapunov functions for conewise linear systems in
which the feasible region of each mode is a polyhedral cone. Bundfuss and D\"{u}r~\cite{BundfussDur2009} present necessary and sufficient conditions for the existence of such Lyapunov functions when the cone is polyhedral.   
{\c C}amlibel, Pang and Shen~\cite{CamlibelPangShen2006} derived sufficient conditions for the stability of LCS~\eqref{lcs}.  Extending standard approaches that employ piecewise-quadratic Lyapunov functions depending only on state $x(t)$, the authors in~\cite{CamlibelPangShen2006} proposed an extended quadratic Lyapunov function that depends on both $x(t)$ and $\lambda(t)$.  However, the authors did not provide an algorithm for computing such a Lyapunov function.  Recently, Aydinoglu, Preciado, and Posa~\cite{Aydinoglu2020} extended the sufficient conditions of~\cite{CamlibelPangShen2006}  under the assumption of an existence of a feedback controller that depends on both $x(t)$ and $\lambda(t)$.  The authors also proposed an algorithm based on bilinear matrix inequalities for jointly determining the feedback controller and the Lyapunov function.  However, we are not aware of similar results for the \dlcs.

In this paper, we focus on the stability analysis of the \dlcs\, and the development of algorithms for computing a Lyapunov function certifying stability.  The paper makes three main contributions.

First, in \S\ref{sec:stability} we derive sufficient conditions for the stability of a \dlcs\, using two quadratic Lyapunov functions.  The first Lyapunov function is a quadratic function that depends only on the states of the \dlcs~\eqref{dlcs}.  Such a Lyapunov function is referred to as a \emph{Common Quadratic Lyapunov Function} (CQLF)~\cite{Liberzon2003, Shorten2007}.  The second Lyapunov function is a quadratic function that depends on both the states and complementarity variables of the \dlcs~\eqref{dlcs}.  This is the discrete-time analogue of the \emph{Extended Quadratic Lyapunov Function} (EQLF) introduced in~\cite{CamlibelPangShen2006}.  The EQLF is a piecewise-quadratic function of the state variables.  The stability conditions are provided under the assumption that $\ComplCompl$ is a Q-matrix and a R$_0$-matrix~\cite{CottlePangStone}.  The Q-matrix is the largest class of matrices for which the complementarity constraints in~\eqref{dlcs:compl} have a solution for all $x_k$.  Our assumptions on the matrix $F$ are weaker than those used in~\cite{Aydinoglu2020,CamlibelPangShen2006}.

Second, in \S\ref{sec:algorithm} we present exact algorithms for computing the CQLF and EQLF for a \dlcs. The computation of Lyapunov functions requires finding a feasible solution for a copositive program that is defined over a nonconvex cone---in particular, the union of convex cones.  It is well-known that checking whether a given matrix is copositive is a co-NP-complete problem~\cite{MurtyKabadi1987}.  We pose the problem of computing the Lyapunov functions as semi-infinite programming problem and devise a cutting-plane algorithm.  The cutting plane algorithm alternates a linear programming master problem and a separation problem formulated as a Mixed Integer Quadratically Constrained Program (MIQCP).  Numerical experiments demonstrating the utility of the algorithm are presented in \S\ref{sec:numerics}. 

Third, in \S\ref{sec:lcsdlcsstability} we consider the case where the \dlcs\, is obtained by a time-stepping scheme applied to a LCS.  An important question as yet unanswered in the literature is whether stability of the LCS is equivalent to stability of the \dlcs\, when the time-step $\tstep$ is sufficiently small.  We show equivalence for both the case of CQLF and EQLF under the assumption that the matrix $\tComplCompl$ in the LCS~\eqref{lcs} is a P-matrix.

\emph{Notation.} The set of reals, nonnegative reals, integers and nonnegative integers is denoted by $\R$, $\R_+$, $\Z$ and $\Z_+$ respectively.  The 1-norm sphere is denoted as $\Sphere^n_1 = \{ w \in \R^n \,|\, \|w\|_1 = 1\}$. For a vector $v \in \R^n$ $\mysub{v}{i}$ denotes the $i$-th component of the vector.  
For $v \in \R^n$ the notation $\mysub{v}{\alpha}$ for $\alpha \subset \{1,\ldots,n\}$ denotes the subvector obtained by removing the elements $\mysub{v}{i}$ for $i \notin \alpha$. 
The notation $(x,y)$ of two column vector $x,y$ is also used to represent a vertical stacking of the vectors 
$\begin{psmallmatrix} x \\ y\end{psmallmatrix}$.  
For a matrix $M \in \R^{n \times m}$ $\mysub{M}{ij}$ denotes the $(i,j)$-th entry in the matrix. For a matrix $M$ and index sets $\alpha \subseteq \{1,\ldots,n\}$, $\beta \subseteq \{1,\ldots,m\}$  the notation $\mysub{M}{\alpha\beta}$ denotes the submatrix of $M$ formed by removing rows not in $\alpha$ and removing columns not in $\beta$. The notation $\mysub{M}{\alpha\bullet}$ and $\mysub{M}{\bullet\alpha}$ refers to the submatrix of $M$ obtained by retaining only the rows and columns in $\alpha$ respectively.  The notation $I_n$ represents the $n \times n$ identity matrix. The set $\Sy^n$ denotes the $n \times n$ real, symmetric matrices, and $\Sy^n_+, \Sy^n_{++}$ denote the set of symmetric positive semidefinite and positive definite matrices. A symmetric positive (semi)definite matrix $N \in \Sy^n$ is denoted as $N \succ (\succcurlyeq) 0$.

\section{Background}\label{sec:background}

In this section, we present relevant results and notation that are used throughout the paper.

\subsection{Linear Complementarity Problem (LCP)}
The Linear Complementarity Problem (LCP), denoted as \lcp{$q$}{$M$}, is to find a solution $\lambda \in \R^{n_c}$ of
\begin{equation}
	0 \leq \lambda \perp M\lambda + q \geq 0,	\label{lcp}
\end{equation}
where $q \in \R^{n_c}, M \in \R^{n_c \times n_c}$.   The solution set of the LCP~\eqref{lcp} is denoted as \sol{$q$}{$M$} $= \{ \lambda \,|\, \lambda \text{ satisfies } \eqref{lcp}\}$.  If $q = Nx$ for some matrix $N$, the graph of $\sol{Nx}{M}$ is denoted as $\gr\,\sol{Nx}{M}$ $= \{(x,\lambda) \,|\, \lambda \in \sol{Nx}{M} \}$.  The monograph~\cite{CottlePangStone} provide an in-depth treatment describing conditions for the existence and uniqueness of solutions to (LCP).  We provide a brief summary of definitions and results that will be useful for subsequent developments. 

Given a solution $\lambda \in$ \sol{$q$}{$M$}, we define index sets $\alpha(\lambda)$, $\beta(\lambda)$, $\gamma(\lambda)$ partitioning $\{1,\ldots,n_c\}$ as
\begin{subequations}
	\begin{align}
		\alpha(\lambda) &:=\; \{ i \,|\, \mysub{\lambda}{i} > 0 =  \mysub{M\lambda + q}{i} \} \label{lcpind:alpha} \\
		\beta(\lambda) &:=\; \{ i \,|\, \mysub{\lambda}{i} = 0 =  \mysub{M\lambda + q}{i} \} \label{lcpind:beta} \\
		\gamma(\lambda) &:=\; \{ i \,|\, \mysub{\lambda}{i} = 0 <  \mysub{M\lambda + q}{i} \} \label{lcpind:gamma}.
	\end{align}\label{lcpind}
\end{subequations}	
If $\beta(\lambda) = \emptyset$, then $\lambda$ is called a \emph{strict complementarity solution} of \lcp{$q$}{$M$}.  Otherwise, it is called a \emph{nonstrict complementarity solution}.  We will suppress $\lambda$ when the dependence is clear from the context. 

A matrix $M$ is a Q-matrix if \lcp{$q$}{$M$} has a solution for all $q \in \R^{n_c}$.  A matrix $M$ is an R$_0$-matrix if \sol{$0$}{$M$} $= \{0\}$.  
A matrix $M$ is said to be a P-matrix if the principal minors are all positive, i.e.  
$\det(\mysub{M}{JJ}) > 0$ for all $J \subseteq \{1,\ldots,n_c\}$.  It is known that a P-matrix is both an R$_0$-matrix and a Q-matrix \cite[Theorem 3.9.22]{CottlePangStone}. The following lemma stating properties of solutions to \lcp{$q$}{$M$} will be useful in the remainder of the paper.

\begin{lemma}[\cite{CottlePangStone}]\label{lemma:Pmatprops}
Consider the \lcp{$q$}{$M$} where the matrix $M$ is a P-matrix.  Then
\begin{enumerate}[(a)]
	\item  \lcp{$q$}{$M$} has an unique solution $\lambda(q)$ for every $q$.
	\item $\lambda(q)$ is a piecewise linear function of $q$, is globally Lipschitz continuous, and is
	directionally differentiable.  
	\item There exists a constant $c_M > 0$ such that $\|\lambda(q)\| \leq c_M \|q\|$ for all $q \in \R^{n_c}$.
\end{enumerate}
\end{lemma}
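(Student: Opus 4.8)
The plan is to establish the three claims in order, leaning on the standard characterization that $M$ is a P-matrix if and only if for every nonzero $z \in \R^{n_c}$ there is an index $i$ with $\mysub{z}{i}\mysub{Mz}{i} > 0$, together with the already-noted fact \cite[Theorem 3.9.22]{CottlePangStone} that a P-matrix is a Q-matrix. For part~(a), existence of a solution for every $q$ is immediate from the Q-matrix property. For uniqueness, suppose $\lambda^1, \lambda^2 \in \sol{q}{M}$, set $z = \lambda^1 - \lambda^2$, and write $w^j = M\lambda^j + q$ so that $Mz = w^1 - w^2$. Using the complementarity identities $\mysub{\lambda^j}{i}\mysub{w^j}{i} = 0$ and the nonnegativity of all $\mysub{\lambda^j}{i}$ and $\mysub{w^j}{i}$, a direct expansion gives $\mysub{z}{i}\mysub{Mz}{i} = -\big(\mysub{\lambda^1}{i}\mysub{w^2}{i} + \mysub{\lambda^2}{i}\mysub{w^1}{i}\big) \le 0$ for every $i$. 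If $z \ne 0$, this contradicts the P-matrix characterization, so $\lambda^1 = \lambda^2$ and the solution map $\lambda(q)$ is well defined.

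For part~(b), I would expose the piecewise-linear structure through the active-set decomposition. For each $\alpha \subseteq \{1,\ldots,n_c\}$ with complement $\bar\alpha$, the principal submatrix $\mysub{M}{\alpha\alpha}$ is again a P-matrix and hence invertible, so one may define the candidate $\lambda^\alpha(q)$ by $\mysub{\lambda^\alpha}{\alpha} = -\mysub{M}{\alpha\alpha}^{-1}\mysub{q}{\alpha}$ and $\mysub{\lambda^\alpha}{\bar\alpha} = 0$. This candidate solves \lcp{q}{M} precisely on the polyhedral cone $\setC_\alpha$ of those $q$ for which $\mysub{\lambda^\alpha}{\alpha} \ge 0$ and $\mysub{(M\lambda^\alpha + q)}{\bar\alpha} \ge 0$. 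By part~(a) these finitely many cones cover $\R^{n_c}$, and on each of them $\lambda(q)$ coincides with the linear map $\lambda^\alpha(q)$, which is the claimed piecewise linearity. Continuity is then automatic, since on any overlap $\setC_\alpha \cap \setC_\beta$ both linear formulas must equal the unique solution and hence agree. Global Lipschitz continuity follows by taking the Lipschitz constant to be the maximum of the finitely many operator norms of the maps $q \mapsto \lambda^\alpha(q)$, and directional differentiability holds at every $q$ because, for a fixed direction $d$ and all sufficiently small $t > 0$, the point $q + td$ lies in some cone $\setC_\alpha$ on which $\lambda$ agrees with the affine map $\lambda^\alpha$.

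For part~(c), I would use positive homogeneity: if $\lambda \in \sol{q}{M}$ then $t\lambda \in \sol{tq}{M}$ for every $t \ge 0$, so by uniqueness $\lambda(tq) = t\,\lambda(q)$; that is, $\lambda(\cdot)$ is positively homogeneous of degree one. The map $q \mapsto \|\lambda(q)\|$ is continuous by part~(b), hence attains a finite maximum $c_M$ on the compact unit sphere $\{q : \|q\| = 1\}$. For arbitrary $q \ne 0$, homogeneity gives $\|\lambda(q)\| = \|q\|\,\|\lambda(q/\|q\|)\| \le c_M\|q\|$, and the bound holds trivially at $q = 0$.

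The genuinely substantive inputs are the P-matrix characterization underlying uniqueness and the invertibility of all principal submatrices $\mysub{M}{\alpha\alpha}$; everything else is bookkeeping over the finite family of cones $\setC_\alpha$. The step most likely to need care is verifying that the cones $\setC_\alpha$ truly cover $\R^{n_c}$ and that $\lambda(q)$ is single-valued on their overlaps, but both reduce to part~(a), so once existence and uniqueness are in hand the remaining claims follow with little additional difficulty.
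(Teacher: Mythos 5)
Your proof is correct, and since the paper does not prove this lemma at all --- it is quoted directly from \cite{CottlePangStone} --- your argument is in fact a faithful reconstruction of the standard treatment in that reference: the sign-reversal characterization of P-matrices for uniqueness, the complementary-cone (active-set) decomposition over the sets $\setC_\alpha$ for piecewise linearity, Lipschitz continuity, and directional differentiability, and positive homogeneity plus compactness of the unit sphere for the bound in (c). The only steps you gloss (partitioning a segment $[q,q']$ across the finitely many closed cones to propagate the pieceswise Lipschitz constants globally, and the fact that $q+td$ remains in a single cone $\setC_\alpha$ for all sufficiently small $t>0$ rather than possibly different cones for different $t$) are standard and do hold here because each $\setC_\alpha$ is closed and convex, so its intersection with a ray is a closed interval.
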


\subsection{Lyapunov Stability}

Consider the discrete-time nonlinear dynamical system
\beq
	x_{k+1} \in f(x_k) ;\;\; x_0 = \hat{x}, \label{dsystem}
\eeq
where $f : \R^{n_x} \rightarrow 2^{\R^{n_x}}$ is a mapping into subsets of $\R^{n_x}$.  Let $x^e$ be an equilibrium point of the system~\eqref{dsystem}; i.e. $f(x^e) = 0$, and let $x_k(\hat{x})$, $k \in \Z_+$, be a trajectory of the system initialized at $\hat{x}$. We do not assume the trajectory is unique.

\begin{definition}
The equilibrium point $x^e$ is
\begin{enumerate}[(i)]
	\item stable (in the sense of Lyapunov) if for each $\epsilon > 0$, there is $\delta_\epsilon > 0$ such that 
	\[
		\| \hat{x} - x^e \| < \delta_\epsilon \implies \| x_k(\hat{x}) - x^e \| < \epsilon \quad \forall k \in \Z_+
	\] 
	for all trajectories $x_k(\hat{x})$; 
	\item exponentially stable if there exist scalars $\delta > 0$, $c > 0$ and $0 \leq \mu < 1$ such that
	\[
		\| \hat{x} - x^e \| < \delta \implies \| x_k(\hat{x}) - x^e \| \leq c \mu^k \|\hat{x} - x^e\| \quad \forall k \in \Z_+
	\]
	for all trajectories $x_k(\hat{x})$.
\end{enumerate}
\end{definition}

Note that the stability notions defined above are the so-called \emph{strong} exponential stability since they are expected to hold for every trajectory from the given initial condition.
If the function $f(x)$ is positively homogeneous in $x$, i.e. $f(\tau x) = \tau f(x)$ then the stability results hold \emph{globally} for all initial conditions $\hat{x} \in \R^{n_x}$.  Note that the functions defining the dynamics in both LCS and \dlcs\ are positively homogeneous.  We discuss how stability analysis for the homogeneous case can be extended to some inhomogeneous systems in \S ~\ref{sec:inhomdlcs}.

\subsection{Semidefiniteness and Matrix Copositivity}  

 Semidefiniteness with respect to a closed, not necessarily convex, cone ${\cal K}$ is called copositivity with respect to ${\cal K}$.  Specifically, a matrix $M \in \R^{n \times n}$ is said to be \emph{copositive} with respect to a cone ${\setK} \subseteq \R^n$ if $u^T M u \geq 0$ for all $u \in {\setK}$.  In this case, we say $M$ is a ${\cal K}$-copositive matrix, and denote this fact by $M \succcurlyeq_{\setK} 0$.  We use the notation $M \succcurlyeq_{\setK} \epsilon$ to mean that $u^T M u \geq \epsilon$ for all $u \in {\setK}$.  Similarly, if $u^T M u  > 0$ for all $u \in {\cal K}$ then $M$ is a strict ${\cal K}$-copositive matrix, which we denote by $M \succ_{\setK} 0$.  If $u^T M u  > \epsilon$ for all $u \in {\cal K}$, we say that $M \succ_{\setK} \epsilon$.  Note that the standard semidefinite and copositivity condtions are obtained by choosing $\setK = \R^n$ and $\setK = \R^n_+$ respectively.  For additional details, the reader is referred to~\cite{BundfussDurLyapunov2009,EichfelderPovh2013}.

\section{Stability of \dlcs\,}\label{sec:stability}

In this section we present stability conditions for the \dlcs, at $x^e = 0$.  We assume that $(x^e,\lambda^e) = (0,0)$ an equilibrium point of \dlcs, i.e. $(x^e,\lambda^e)$ is a solution of
\begin{subequations}
	\begin{align}
		x =&\; Ax + \ComplState\lambda \label{dlcseq:dyn} \\
		0 \leq \lambda \perp&\; \StateCompl x + \ComplCompl \lambda \geq 0. 
		\label{dlcseq:compl}
	\end{align}\label{dlcseq}
\end{subequations}
The uniqueness of the equilibrium is guaranteed if $\ComplCompl$ is a R$_0$-matrix i.e. \lcp{$0$}{$\ComplCompl$} $= \{0\}$. We first show sufficient conditions for a CQLF in \S\ref{sec:cqlf} and extend these to sufficient conditions for an EQLF in \S\ref{sec:eqlf}. A discussion on the applicability of stability results to inhomogeneous \dlcs\, is presented in \S\ref{sec:inhomdlcs} under the assumption that $\ComplCompl$ is a P-matrix.  

\subsection{Common Quadratic Lyapunov Function (CQLF)}\label{sec:cqlf}

We will first present sufficient conditions for stability of the equilibrium point using a quadratic Lyapunov function that depends only on the states, or a Common Quadratic Lyapunov function (CQLF)~\cite{Liberzon2003, Shorten2007}.

\begin{theorem}\label{thm:CQLF}
Consider the \dlcs\, in~\eqref{dlcs}.  Assume that $F$ is a Q-matrix and R$_0$-matrix.  Let $\psi : \R^{n_x+n_c} \rightarrow \R$ denote a quadratic function 
\begin{align}
	\psi(x,\lambda) &= \begin{pmatrix} x \\ \lambda \end{pmatrix}^T M(P_{xx}) \begin{pmatrix} x \\ \lambda \end{pmatrix}, \mbox{ where} \\ 
	M(P_{xx}) &= \begin{pmatrix} 
		A^TP_{xx} A - P_{xx} & A^TP_{xx}C \\
		C^TP_{xx}A & C^TP_{xx}C
	\end{pmatrix}, \label{defMmatrix}
\end{align}
and  $P_{xx} \in \Sy^{n_x}$.  The point $x^e = 0$ is 
\begin{enumerate}[(i)]
	\item \emph{stable} if there exists a $P_{xx} \succ 0$ with $\psi(x,\lambda) \leq 0$ for all $(x,\lambda) \in \gr\,\sol{\StateCompl x}{\ComplCompl}$, and 
	\item \emph{exponentially stable} if there exists a $P_{xx} \succ 0$ with $\psi(x,\lambda) < 0$ for all nonzero $(x,\lambda) \in \gr\,\sol{\StateCompl x}{\ComplCompl}$.
\end{enumerate}

\end{theorem}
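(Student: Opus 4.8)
The plan is to read $\psi$ as the one-step increment of the candidate $V(x) = x^T P_{xx} x$. A direct expansion of $V(A x_k + \ComplState\lambda_{k+1}) - V(x_k)$ shows that $V(x_{k+1}) - V(x_k) = \psi(x_k,\lambda_{k+1})$ precisely when $(x_k,\lambda_{k+1}) \in \gr\,\sol{\StateCompl x_k}{\ComplCompl}$, i.e.\ along every admissible transition of~\eqref{dlcs}; the $\gr\,\sol{\cdot}{\cdot}$ constraint appearing in the hypotheses is exactly the set of such transitions. Because $\ComplCompl$ is a Q-matrix, $\sol{\StateCompl x}{\ComplCompl} \neq \emptyset$ for every $x$, so a trajectory exists from every $\hat{x}$ and the quantifier ``for all trajectories'' ranges over a nonempty (generally non-singleton) set. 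Since $P_{xx} \succ 0$, I would record the sandwich $p_{\min}\|x\|^2 \le V(x) \le p_{\max}\|x\|^2$, where $p_{\min}>0$ and $p_{\max}$ denote the extreme eigenvalues of $P_{xx}$.

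For part (i), the hypothesis $\psi \le 0$ on the graph gives $V(x_{k+1}) \le V(x_k)$ for every admissible transition, hence $V$ is nonincreasing along every trajectory no matter which LCP solution is selected at each step. The usual sublevel-set argument then yields Lyapunov stability: given $\epsilon>0$, take $\delta_\epsilon = \epsilon\sqrt{p_{\min}/p_{\max}}$, so that $\|\hat{x}\|<\delta_\epsilon$ forces $V(x_k) \le V(\hat{x}) < p_{\min}\epsilon^2$ and therefore $\|x_k\| < \epsilon$ for all $k$.

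The crux of parts (ii) and (iii) is to upgrade the pointwise sign condition on $\psi$ to a uniform estimate $\psi(x,\lambda) \le -c\|x\|^2$ on the entire graph, for some $c>0$. I would first establish compactness of the slice $\setS := \{(x,\lambda)\in \gr\,\sol{\StateCompl x}{\ComplCompl} : \|x\|=1\}$. Closedness is immediate, as the graph is cut out by the nonnegativity conditions and the complementarity (bilinear) equality, all of which are closed. Boundedness is the main obstacle, and this is exactly where the R$_0$ assumption enters: if $\setS$ were unbounded there would exist $x^j$ with $\|x^j\|=1$ and $\lambda^j \in \sol{\StateCompl x^j}{\ComplCompl}$ with $\|\lambda^j\|\to\infty$; normalizing $\mu^j := \lambda^j/\|\lambda^j\|$ and dividing the LCP through by $\|\lambda^j\|$, the term $\StateCompl x^j/\|\lambda^j\| \to 0$, so any limit point $\mu$ of $\mu^j$ (with $\|\mu\|=1$) satisfies $0 \le \mu \perp \ComplCompl\mu \ge 0$, i.e.\ $\mu \in \sol{0}{\ComplCompl}\setminus\{0\}$, contradicting that $\ComplCompl$ is R$_0$. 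With $\setS$ compact and $\psi$ continuous, and noting that R$_0$ forces $x=0 \Rightarrow \lambda=0$ on the graph (so that (ii) and (iii) in fact impose the same pointwise requirement, namely $\psi<0$ whenever $x\neq 0$), I obtain $\max_{\setS}\psi =: -c < 0$. Finally, positive homogeneity of the graph as a cone (if $(x,\lambda)$ is admissible then so is $(\tau x,\tau\lambda)$ for every $\tau>0$), together with the degree-two homogeneity of $\psi$, extends the bound from $\setS$ to $\psi(x,\lambda) \le -c\|x\|^2$ for all admissible $(x,\lambda)$.

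The conclusion is then routine. Along any trajectory, $V(x_{k+1}) - V(x_k) = \psi(x_k,\lambda_{k+1}) \le -c\|x_k\|^2 \le -(c/p_{\max})V(x_k)$, so $V(x_{k+1}) \le \rho\,V(x_k)$ with $\rho := 1 - c/p_{\max}$; since $V\ge 0$ and $c>0$, one checks $\rho \in [0,1)$. Iterating and applying the sandwich bounds gives $\|x_k\| \le \sqrt{p_{\max}/p_{\min}}\;\rho^{k/2}\|\hat{x}\|$, which is exponential stability and, a fortiori, asymptotic stability; positive homogeneity of the dynamics makes each of these global. I would remark that, because the R$_0$ assumption collapses the hypotheses of (ii) and (iii) to the same condition, a single compactness-plus-homogeneity argument delivers both parts simultaneously---homogeneity of the system is precisely what makes asymptotic and exponential stability coincide here.
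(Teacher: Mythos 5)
Your proof is correct, and its skeleton matches the paper's: both read $\psi(x_k,\lambda_{k+1})$ as the one-step increment $V(x_{k+1})-V(x_k)$ of $V(x)=x^TP_{xx}x$ and telescope along trajectories. Where you genuinely depart is in parts (ii) and (iii). The paper simply \emph{asserts}, for (iii), that $\psi<0$ on the graph yields a $\gamma>0$ with $\psi(x,\lambda)\le -\gamma\|(x,\lambda)\|^2$, and it handles (ii) by a monotone-sequence argument (``$\{V(x_k)\}$ is strictly decreasing unless $x=0$, and $V$ is bounded below''), which as stated does not by itself force $V(x_k)\to 0$; a uniform decrease on annuli is needed and left implicit. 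You supply exactly the missing quantitative ingredient: compactness of the slice $\{(x,\lambda)\in\gr\,\sol{\StateCompl x}{\ComplCompl} : \|x\|=1\}$, with boundedness proved by the standard R$_0$ normalization (any limit of $\lambda^j/\|\lambda^j\|$ would be a nonzero element of $\sol{0}{\ComplCompl}$), combined with degree-two homogeneity of $\psi$ over the conic graph, giving $\psi(x,\lambda)\le -c\|x\|^2$. It is worth noting the trade-off: the paper's implicit route, normalizing over the joint sphere $\|(x,\lambda)\|=1$, needs only closedness of the graph and not R$_0$, whereas your $\|x\|=1$ slice genuinely uses R$_0$; but R$_0$ is assumed anyway, and your choice buys more --- you correctly observe that R$_0$ collapses the hypotheses of (ii) and (iii) into one (the only graph point with $x=0$ is the origin), so a single argument delivers \emph{exponential} stability under the hypothesis of (ii), which is strictly stronger than the paper's stated asymptotic conclusion and simultaneously repairs its sketchy step. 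Two small points you handle implicitly, exactly as the paper does: hypothesis (iii) must be read on the graph minus the origin, since $(0,0)$ always lies in $\gr\,\sol{\StateCompl x}{\ComplCompl}$ and $\psi(0,0)=0$; and your claim $\rho=1-c/p_{\max}\ge 0$ deserves the one-line check $0\le V(x_{k+1})\le V(x_k)+\psi(x_k,\lambda_{k+1})\le p_{\max}-c$ at any graph point with $\|x_k\|=1$, which you gesture at with ``one checks.''
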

\begin{proof}
Let $x_0$ be an arbitrary initial condition for the \dlcs\, and let $(x_k,\lambda_k)$ for $k \geq 1$ be a trajectory from such an initial state with $\lambda_{k+1} \in \sol{\StateCompl x_k}{\ComplCompl}$.  
Consider the function $V(x_k) = x_k^T P_{xx}x_k$.  By the positive definiteness of $P_{xx}$, $V(x) \geq 0$ for all $x \in \R^{n_x}$ and $V(x) = 0$ if and only if $x = x^e$.  Thus, $V(x)$ is a Lyapunov function. 
The Lyapunov function at $x_{k+1}$ is
\[
	V(x_{k+1}) = \begin{pmatrix} x_k \\ \lambda_{k+1} \end{pmatrix}^T 
	\begin{pmatrix} 
		A^TP_{xx} A & A^TP_{xx}C \\
		C^TP_{xx}A & C^TP_{xx}C
	\end{pmatrix} \begin{pmatrix} x_k \\ \lambda_{k+1} \end{pmatrix}.
\]
It can be verified that $\psi(x_k,\lambda_{k+1}) = V(x_{k+1}) - V(x_k)$.  If $\psi(x,\lambda) \leq 0$ for all $(x,\lambda) \in \gr\,\sol{\StateCompl x}{\ComplCompl}$ then $V(x_{k+1}) \leq V(x_k)$ for all $x_k \in \R^{n_x}$.  In other words, along all trajectories of \dlcs\, from $x_0$  
\begin{align}
    V(x_{k+1}) \leq V(x_k) \implies V(x_k) \leq V(x_0). \label{decV}
\end{align}
Since $P_{xx} \succ 0$ there exists $\mu > 0$ such that $V(x_k) \geq \mu \|x_k\|^2$.  Further, $V(x) \leq \|P_{xx}\| \|x\|^2$.  Combining the two observations in~\eqref{decV} yields $\|x_k\|^2 \leq (\|P_{xx}\|/\mu) \|x_0\|^2$ for all $k$ and all trajectories from $x_0$.  The claim on stability in (i) follows.  
If $\psi(x,\lambda) < 0$ then there exists a $\omega > 0$ such that $\psi(x,\lambda) \leq -\omega \|(x,\lambda)\|^2$ for all $(x,\lambda) \in \gr\,\sol{\StateCompl x}{\ComplCompl}$.  Such a $\omega$ can be obtained as $-\omega := \max \{ (x,\lambda)^T M(P_{xx}) (x,\lambda) \,|\, \lambda \in \sol{\StateCompl x}{\ComplCompl},\, \|(x,\lambda)\| = 1 \}$ which is well defined.
Then along all trajectories of \dlcs\,
\[
 V(x_{k+1}) - V(x_k) \leq -\omega \|(x_k,\lambda_{k+1})\|^2 \leq -\omega \|x_k\|^2 \leq -(\omega/\kappa) V(x_k)
\]
where the second inequality follows from $\|x_k\|^2 \leq \|(x_k,\lambda_{k+1})\|^2$ and the third from $V(x_k) \leq \kappa \|x_k\|^2$ where $\kappa = \max(2\omega, \|P_{xx}\|)$.  Hence, $V(x_k) \leq (1 - \omega/\kappa)^k V(x_0)$ along all trajectories of \dlcs\, from $x_0$.  Combining this with $V(x) \geq \omega \|x\|^2$ and $V(x_0) \leq \kappa \|x_0\|^2$ yields $\|x_k\|^2 \leq (\kappa/\mu)(1 - \omega/\kappa)^k \|x_0\|^2$ and the claim in (ii) holds.
\end{proof}

Theorem~\ref{thm:CQLF} shows that a sufficient condition for the exponential stability of the solution $x^e = 0$ to \eqref{dlcs} is to
\begin{subequations}
\begin{align}
	\text{Find a} &\;\; P_{xx} \in \Sy^{n_x} \\
	\text{ such that } &\; 	P_{xx} \succ 0, \mbox{ and } \label{feasPxx:setK1} \\
				&\; -M(P_{xx})  \succ_{\setK} 0, \label{feasPxx:setK2}
\end{align}\label{feasPxx}
\end{subequations}
where $M(P_{xx})$ is defined in~\eqref{defMmatrix}, and the set $\setK  \subseteq \R^{n_x+n_c}$ is defined as
\begin{equation}
{\setK} = \gr\,\sol{\StateCompl x}{\ComplCompl}. \label{defK}
\end{equation}

\subsection{Extended Quadratic Lyapunov Function (EQLF)}\label{sec:eqlf}

The use of a quadratic function in the states alone can limit the set of systems that can be stabilized.  Specifically, there exist linear switching systems that can be stabilized, but for which the CQLF does not exist~\cite[\S2.1.5]{Liberzon2003}.  This has motivated the use of unique quadratic Lyapunov functions for each mode of the switching system with continuity imposed across the switching boundaries~\cite{Branicky1998, DecarloBranicky2000, Hespanha2004, JohanssonRantzer1998, Liberzon2003, Shorten2007}.  As noted in the introduction, this construction can also be restrictive since the Lyapunov decrease condition for a mode is enforced over the entire state-space rather than over the subset of the state-space corresponding to the mode.  In a significant departure from prior literature, \c{C}amlibel, Pang and Shen~\cite{CamlibelPangShen2006} proposed a quadratic Lyapunov function that depends on $(x(t),\lambda(t))$ for the LCS~\eqref{lcs}.  This Lyapunov function is piecewise quadratic since $\lambda(t)$ is a piecewise linear function (see Lemma~\ref{lemma:Pmatprops}).  We follow a similar path for the \dlcs\, and derive sufficient conditions for stability of $(x^e,\lambda^e) = 0$.

\begin{theorem}\label{thm:EQLF}
Consider the \dlcs\, in~\eqref{dlcs}.  Assume that $F$ is a Q-matrix and an R$_0$-matrix.  Let $\widehat{\psi} : \R^{n_x+2n_c} \rightarrow \R$ denote a quadratic function 
\begin{align}
	\widehat{\psi}(x,\lambda,\hatlambda) &=\; \begin{pmatrix} x \\ \lambda \\ \hatlambda \end{pmatrix}^T \widehat{M}(P) \begin{pmatrix} x \\ \lambda \\ \hatlambda \end{pmatrix}, \\
	\text{with } \widehat{M}(P) &=\; \begin{pmatrix} 
		A^TP_{xx} A - P_{xx} & A^TP_{xx}\ComplState - P_{x\lambda} &  A^TP_{x\lambda} \\
		\ComplState^TP_{xx}A - P_{x\lambda}^T & \ComplState^TP_{xx}\ComplState - P_{\lambda\lambda} & \ComplState^TP_{x\lambda} \\
		 P_{x\lambda}^TA& P_{x\lambda}^T\ComplState & P_{\lambda\lambda}
	\end{pmatrix}, \label{defMhatmatrix}
\end{align}
where $ P = \begin{psmallmatrix} P_{xx} & P_{x\lambda} \\ P_{x\lambda}^T & P_{\lambda\lambda} \end{psmallmatrix}$, 
$P_{xx} \in \Sy^{n_x}$, $P_{\lambda\lambda} \in \Sy^{n_c}$ and $P_{x\lambda} \in \R^{n_x \times n_c}$.  If there exists $P_{xx}, P_{x\lambda}, P_{\lambda\lambda}$ such that the function $\widehat{V}(x,\lambda)$ 
\begin{equation}
	\widehat{V}(x,\lambda) = \begin{pmatrix} x \\ \lambda \end{pmatrix}^T 
	P
	\begin{pmatrix} x \\ \lambda \end{pmatrix}
\end{equation}
satisfies $\widehat{V}(x,\lambda) \geq 0$ for all $(x,\lambda) \in \gr\,\sol{\StateCompl x}{\ComplCompl}$ and $\widehat{V}(x,\lambda) = 0$ if and only if $(x,\lambda) = (x^e,\lambda^e)$,  then $(x^e,\lambda^e) = 0$ is
\begin{enumerate}[(i)]
	\item stable if $\widehat{\psi}(x,\lambda,\hatlambda) \leq 0$ for all $(x,\lambda) \in \gr\,\sol{\StateCompl x}{\ComplCompl}$ and $\hatlambda \in \sol{\StateCompl(A x + \ComplState \lambda)}{\ComplCompl}$, and
	\item exponentially stable if $\widehat{\psi}(x,\lambda,\hatlambda) < 0$ for all nonzero $(x,\lambda) \in \gr\,\sol{\StateCompl x}{\ComplCompl}$ and $\hatlambda \in \sol{\StateCompl(A x + \ComplState \lambda)}{\ComplCompl}$.
\end{enumerate}
\end{theorem}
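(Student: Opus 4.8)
The plan is to mirror the CQLF argument of Theorem~\ref{thm:CQLF}, but to track the pair $(x_k,\lambda_{k+1})$ rather than $x_k$ alone, since the Lyapunov function now lives on the graph cone $\setK = \gr\,\sol{\StateCompl x}{\ComplCompl}$. First I would fix an initial state $x_0$; the Q-matrix assumption guarantees that a full trajectory $(x_k,\lambda_{k+1})$ exists with $\lambda_{k+1}\in\sol{\StateCompl x_k}{\ComplCompl}$ and $x_{k+1}=Ax_k+\ComplState\lambda_{k+1}$. I would then verify the key algebraic identity
\[
  \widehat{\psi}(x_k,\lambda_{k+1},\lambda_{k+2}) = \widehat{V}(x_{k+1},\lambda_{k+2}) - \widehat{V}(x_k,\lambda_{k+1}),
\]
by substituting $x_{k+1}=Ax_k+\ComplState\lambda_{k+1}$ into $\widehat{V}(x_{k+1},\lambda_{k+2})$ and expanding block by block; this is exactly the computation that produces $\widehat{M}(P)$ in~\eqref{defMhatmatrix}. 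Note that the appearance of $\lambda_{k+2}$ forces the auxiliary variable to satisfy $\hatlambda=\lambda_{k+2}\in\sol{\StateCompl(Ax_k+\ComplState\lambda_{k+1})}{\ComplCompl}$, which is precisely the feasible set over which $\widehat{\psi}$ is controlled in the hypotheses.

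Second, writing $W_k := \widehat{V}(x_k,\lambda_{k+1})$, the identity gives $W_{k+1}-W_k=\widehat{\psi}(x_k,\lambda_{k+1},\lambda_{k+2})$, so under hypothesis (i) the sequence $\{W_k\}$ is nonincreasing and hence $W_k\le W_0$ for all $k$. To convert this into a bound on $\|x_k\|$ I need two-sided quadratic bounds on $\widehat{V}$ over $\setK$. The cone $\setK$ is closed (the complementarity conditions defining it are closed) and positively homogeneous; since $\widehat{V}$ is continuous, positive on $\setK\setminus\{0\}$, and vanishes only at the origin by assumption, minimizing and maximizing $\widehat{V}$ over the compact set $\setK\cap\{\|(x,\lambda)\|=1\}$ yields constants $0<\omega_1\le\omega_2$ with $\omega_1\|(x,\lambda)\|^2\le\widehat{V}(x,\lambda)\le\omega_2\|(x,\lambda)\|^2$ on $\setK$. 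I also need a linear growth bound $\|\lambda\|\le c_F\|x\|$ on $\setK$: this follows from the R$_0$ assumption by a standard normalization argument, since a divergent sequence of solutions, rescaled to unit norm, would converge to a nonzero element of $\sol{0}{\ComplCompl}=\{0\}$. Combining $\omega_1\|x_k\|^2\le W_k\le W_0\le\omega_2(1+c_F^2)\|x_0\|^2$ then gives stability as in part (i).

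Third, for exponential stability (iii) I would upgrade strict negativity to a uniform estimate: the set $\widehat{\setK}=\{(x,\lambda,\hatlambda):(x,\lambda)\in\setK,\ \hatlambda\in\sol{\StateCompl(Ax+\ComplState\lambda)}{\ComplCompl}\}$ is again a closed, positively homogeneous cone, so compactness of its unit sphere and continuity of $\widehat{\psi}$ produce $\gamma>0$ with $\widehat{\psi}(x,\lambda,\hatlambda)\le-\gamma\|(x,\lambda,\hatlambda)\|^2$ throughout $\widehat{\setK}$. Chaining $W_{k+1}-W_k\le-\gamma\|x_k\|^2\le-\big(\gamma/(\omega_2(1+c_F^2))\big)W_k$ gives geometric decay of $W_k$, and then of $\|x_k\|^2$ via the lower bound, exactly paralleling the CQLF exponential argument.

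The step I expect to be the main obstacle is asymptotic stability (ii), where strict decrease is only known when $x_k\ne0$ and no uniform decrease rate is available. Here I would run a LaSalle-type argument: stability already bounds the trajectory, and with $\|\lambda_{k+1}\|\le c_F\|x_k\|$ the triples $(x_k,\lambda_{k+1},\lambda_{k+2})$ lie in a compact set, so $\{W_k\}$ converges to some $W^\ast\ge0$ and $W_{k+1}-W_k\to0$. Extracting a convergent subsequence of the triples and using closedness of $\setK$ together with upper semicontinuity of the LCP solution map (so the limit $\hatlambda^\ast$ remains a solution of the limiting LCP), continuity forces $\widehat{\psi}$ to vanish at the limit; hypothesis (ii) then gives $x^\ast=0$, and the R$_0$ property forces the corresponding $\lambda^\ast=0$, whence $W^\ast=\widehat{V}(0,0)=0$ and $\|x_k\|^2\le W_k/\omega_1\to0$. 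The delicate points are the closedness and homogeneity bookkeeping for the nonconvex cones and the handling of nonunique trajectories, which the convergence argument accommodates since it never relies on a single trajectory.
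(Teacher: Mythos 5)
Your proposal is correct and follows essentially the same route as the paper's proof: the telescoping identity $\widehat{\psi}(x_k,\lambda_{k+1},\lambda_{k+2})=\widehat{V}(x_{k+1},\lambda_{k+2})-\widehat{V}(x_k,\lambda_{k+1})$, the two-sided quadratic bounds on $\widehat{V}$ over the cone via homogeneity and compactness of its unit sphere, and the uniform estimate $\widehat{\psi}\leq-\gamma\|(x,\lambda,\hatlambda)\|^2$ for the exponential case. The only differences are refinements rather than a different approach: you make explicit the R$_0$-based bound $\|\lambda\|\leq c_F\|x\|$ (which the paper uses only implicitly) and you expand part (ii), which the paper dispatches in one sentence, into a full LaSalle-type subsequence argument using closedness of $\widehat{\setK}$ --- both of which are sound and fill in details the paper leaves terse.
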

\begin{proof}
Let $x_0$ be an arbitrary initial condition for the \dlcs\, and let $(x_k,\lambda_k)$ for $k \geq 1$ be a trajectory from such an initial state with $\lambda_{k+1} \in \sol{\StateCompl x_k}{\ComplCompl}$.  
Then 
	\begin{align*}
		\widehat{V}(x_{k+1},\lambda_{k+2}) 
		=&\; \begin{pmatrix} x_{k+1} \\ \lambda_{k+2} \end{pmatrix}^T 
			   	\begin{pmatrix} P_{xx} & P_{x\lambda} \\ P_{x\lambda}^T & P_{\lambda\lambda} \end{pmatrix}
				\begin{pmatrix} x_{k+1} \\ \lambda_{k+2} \end{pmatrix} \\
			=&\; \begin{pmatrix} Ax_k + \ComplState \lambda_{k+1} \\ \lambda_{k+2} \end{pmatrix}^T 
			   	\begin{pmatrix} P_{xx} & P_{x\lambda} \\ P_{x\lambda}^T & P_{\lambda\lambda} \end{pmatrix}
				\begin{pmatrix} Ax_k + \ComplState \lambda_{k+1} \\ \lambda_{k+2} \end{pmatrix} \\
			=&\; \begin{pmatrix} x_k \\ \lambda_{k+1} \\ \lambda_{k+2} \end{pmatrix}^T
			\begin{pmatrix}
				A^TP_{xx}A & A^TP_{xx}\ComplState  & A^TP_{x\lambda} \\
				\ComplState^TP_{xx}A  & \ComplState^TP_{xx}\ComplState & \ComplState^TP_{x\lambda}  \\
				P_{x\lambda}^TA & P_{x\lambda}^T\ComplState & P_{\lambda\lambda}
			\end{pmatrix}
			\begin{pmatrix} x_k \\ \lambda_{k+1} \\ \lambda_{k+2} \end{pmatrix}.
	\end{align*}
It can be verified that $\widehat{V}(x_{k+1},\lambda_{k+2}) - \widehat{V}(x_k,\lambda_{k+1}) = \widehat{\psi}(x_k,\lambda_{k+1},\lambda_{k+2})$.   
The assumptions on $\widehat{V}$ ensure that $\widehat{V}(x_k,\lambda_{k+1}) \geq 0 \mbox{ and } \widehat{V}(x_{k+1},\lambda_{k+2}) \geq 0$.  
If the condition in (i) holds then 
\begin{align}
\widehat{V}(x_{k+1},\lambda_{k+2}) \leq \widehat{V}(x_k,\lambda_{k+1})
\implies 
\widehat{V}(x_{k},\lambda_{k+1}) \leq \widehat{V}(x_0,\lambda_1) 
\label{decVhat}
\end{align}
for all $k$.  Further, since $\widehat{V}(x_k,\lambda_{k+1}) > 0$ for all nonzero $(x_k,\lambda_{k+1}) \in \gr\sol{\StateCompl x_k}{\ComplCompl}$ we have that $\widehat{V}(x_k,\lambda_{k+1}) \geq \mu \|(x_k,\lambda_{k+1})\|^2$ for some $\mu > 0$.   Such a $\mu$ can be obtained as $\mu = \min \{ (x,\lambda)^T \widehat{M}(P) (x,\lambda) \,|\, (x,\lambda) \in \gr\sol{\StateCompl x}{\ComplCompl}, \|(x,\lambda)\| = 1 \}$.
Further, $V(x_0,\lambda_1) \leq \|P\| \|(x_0,\lambda_1)\|^2$.  Applying these two in~\eqref{decVhat} yields $\|(x_k,\lambda_{k+1})\|^2 \leq (\|P\|/\mu)\|(x_0,\lambda_1)\|^2$ and the claim on stability in (i) follows.  
If the condition in (ii) holds, then there exists $\omega > 0$ such that $\widehat{\psi}(x_k,\lambda_{k+1},\lambda_{k+2}) \leq -\omega \|(x_k,\lambda_{k+1},\lambda_{k+2})\|^2$.  Such a $\omega$ can be obtained as $-\omega := \max \{ (x,\lambda,\widehat{\lambda})^T \widehat{M}(P) (x,\lambda,\widehat{\lambda}) \,|\, \lambda \in \sol{\StateCompl x}{\ComplCompl},\, \widehat{\lambda} \in \sol{\StateCompl(A x + \ComplState \lambda)}{\ComplCompl},\, \|(x,\lambda,\widehat{\lambda})\| = 1 \}$ which is well defined. Then  
\begin{align*}
\widehat{V}(x_{k+1},\lambda_{k+2}) - \widehat{V}(x_k,\lambda_{k+1}) 
& \leq -\omega \|(x_k,\lambda_{k+1},\lambda_{k+2})\|^2 \leq -\omega \|(x_k,\lambda_{k+1})\|^2 \\
& \leq -(\omega/\kappa) \widehat{V}(x_k,\lambda_{k+1})
\end{align*}
where the second inequality follows from $\|(x_k,\lambda_{k+1})\|^2 \leq \|(x_k,\lambda_{k+1},\lambda_{k+2})\|^2$ and the third inequality from $\widehat{V}(x_k,\lambda_{k+1}) \leq \kappa \|(x_k,\lambda_{k+1})\|^2$, where $\kappa = \max(2\omega,\|P\|)$.  Thus, $\widehat{V}(x_k,\lambda_{k+1})$ $\leq (1 - \omega/\kappa)^k$ $\widehat{V}(x_0,\lambda_1)$ along all trajectories of \dlcs\, from the initial point $x_0$.  Combining with $\widehat{V}(x_k,\lambda_{k+1}) \geq \mu \|(x_k,\lambda_{k+1}\|^2$ yields 
\[
\|(x_k,\lambda_{k+1})\|^2 \leq (\kappa/\mu) (1-\omega/\kappa)^k
\|(x_0,\lambda_1)\|^2
\]
and the claim in (iii) follows.
\end{proof}

Note that setting $P_{x\lambda} = 0$ and $P_{\lambda\lambda} = 0$ in~\eqref{defMhatmatrix}, we obtain the sufficient conditions for stability with a CQLF.  
Theorem~\ref{thm:EQLF} shows that a sufficient condition for the exponential stability of the point $x^e = 0$ using an EQLF is to 
\begin{subequations}
\begin{align}
	\text{Find a} &\;\; P \in \Sy^{n_x+n_c} \\
	\text{ such that } &\; 	P \succ_{\setK} 0,  \mbox{ and } \label{feasP:setK1} \\
				&\; -\widehat{M}(P)  \succ_{\widehat{\setK}} 0, \label{feasP:setK2}
\end{align}\label{feasP}
\end{subequations}
where $\widehat{M}(P)$ is defined in~\eqref{defMhatmatrix}, and the set $\widehat{\setK} \subseteq \R^{n_x+2n_c}$ is defined as 
\begin{equation}
\begin{aligned}
\widehat{{\cal K}} =&\; \{ (x,\lambda,\hatlambda) \,|\, (x,\lambda) \in \gr \,\sol{\StateCompl x}{\ComplCompl},\, \hatlambda \in \sol{\StateCompl(A x + \ComplState \lambda)}{\ComplCompl} \}.
\end{aligned}  \label{defKhat}
\end{equation}

\subsection{Extension to Inhomogeneous \dlcs}\label{sec:inhomdlcs}


In this section, we consider a generalization of the \dlcs, the autonomous inhomogeneous \dlcs\, which is given as 
\begin{subequations}
	\begin{align}
		x_{k+1} =&\; Ax_k + \ComplState\lambda_{k+1} + f\label{dlcsinh:dyn}\\
		0 \leq \lambda_{k+1} \perp&\; \StateCompl x_{k} + \ComplCompl 
		\lambda_{k+1} + g \geq 0, \label{dlcsinh:compl}
	\end{align}\label{dlcsinh}
\end{subequations}
where $f \in \R^{n_x}$ and $g \in \R^{n_c}$.

In this section, we show how the stability results in~\S\ref{sec:cqlf} and \S\ref{sec:eqlf} can be applied to establish the local stability of an inhomogeneous \dlcs~\eqref{dlcsinh}. For the results in this section, we must make the stronger assumption that  $\ComplCompl$ is a P-matrix. 

A point $(x^e,\lambda^e)$ is said to be an equilibrium of~\eqref{dlcsinh} if 
\begin{subequations}
	\begin{align}
		x^e =&\; Ax^e + \ComplState\lambda^e + f\label{dlcsinheq:dyn}\\
		0 \leq \lambda^e \perp&\; \StateCompl x^e + \ComplCompl 
		\lambda^e + g \geq 0. \label{dlcsinheq:compl}
	\end{align}\label{dlcsinheq}
\end{subequations}
Let $(\alpha^e,\beta^e,\gamma^e)$ denote the index partition according to~\eqref{lcpind} at the equilibrium $x^e$.  Suppose that $\ComplCompl$ is a P-matrix, and let $\lambda(x)$ be the unique solution of the \lcp{$\StateCompl x + h$}{$\ComplCompl$}.  From the global Lipschitz continuity of $\lambda(x)$ (Lemma~\ref{lemma:Pmatprops}(b)), we have that there exists a neighborhood ${\cal N}(x^e)$ around $x^e$ such that for all $x \in {\cal N}(x^e)$, $\mysub{\lambda(x)}{i} > 0$ for all $i \in \alpha^e$ and $\mysub{\StateCompl x + \ComplCompl \lambda(x)}{i} > 0$ for all $i \in \gamma^e$.  Hence, $\alpha(\lambda(x)) \supseteq \alpha^e$ and $\gamma(\lambda(x)) \supset \gamma^e$ holds for all $x \in {\cal N}(x^e)$. 

 Thus, for all $x \in {\cal N}(x^e)$ the $\lcp{\StateCompl x}{\ComplCompl}$ can be restated as 
\begin{subequations}
	\begin{align}
		0 &=\;  \mysub{\StateCompl x + \ComplCompl \lambda(x) + g}{\alpha^e} \\
		0 \leq \mysub{\lambda(x)}{\beta^e} &\perp\; \mysub{\StateCompl x + \ComplCompl \lambda(x) + g}{\beta^e} \geq 0 \\
		 \mysub{\lambda(x)}{\gamma^e} &=\; 0.
	\end{align}\label{localindslam}
\end{subequations}
Introducing $\delta x_{k+1} = x_{k+1} - x^e$, $\delta \lambda_{k+1} = \lambda_{k+1} - \lambda^e$, and using the observation in~\eqref{localindslam}, the \dlcs\, in~\eqref{dlcsinh} can be written for all $x \in {\cal N}(x^e)$ as
\begin{subequations}
	\begin{align}
		\delta x_{k+1} =&\; A \delta x_k + \ComplState \delta \lambda_{k+1} \label{dlcsinh1:dyn}\\
		0 =&\; \mysub{\StateCompl \delta x_k + \ComplCompl \delta \lambda_{k+1}}{\alpha^e} \\
		0 \leq \mysub{\delta \lambda_{k+1}}{\beta^e} \perp&\; \mysub{\StateCompl \delta x_{k} + \ComplCompl 
		\delta \lambda_{k+1}}{\beta^e} \geq 0 \label{dlcsinh1:compl} \\
		0 =&\; \mysub{\delta \lambda_{k+1}}{\gamma^e}.
	\end{align}\label{dlcsinh1}
\end{subequations}
The system~\eqref{dlcsinh1} can be further simplified to the homogeneous \dlcs\
\begin{subequations}
	\begin{align}
		\delta x_{k+1} =&\; \widehat{A}  \delta x_k + \widehat{\ComplState} \mysub{\delta \lambda_{k+1}}{\beta^e} 
		\label{dlcsinh2:dyn}\\
		0 \leq \mysub{\delta \lambda_{k+1}}{\beta^e} \perp&\; \widehat{\StateCompl} \delta x_{k} + \widehat{\ComplCompl}  
		\mysub{\delta \lambda_{k+1}}{\beta^e} \geq 0, \label{dlcsinh2:compl} 
	\end{align}\label{dlcsinh2}
\end{subequations}
where $\widehat{A} = (A - \mysub{\ComplState}{\bullet \alpha^e} (\mysub{\ComplCompl}{\alpha^e\alpha^e})^{-1} 
		\mysub{\StateCompl}{\alpha^e\bullet}$, $\widehat{\ComplState} = \mysub{\ComplState}{\bullet \beta^e} - 
		\mysub{\ComplState}{\bullet \alpha^e} 
		(\mysub{\ComplCompl}{\alpha^e\alpha^e})^{-1} \mysub{\ComplCompl}{\alpha^e\beta^e}$, 
		$\widehat{\StateCompl} = \mysub{\StateCompl}{\beta^e\bullet} - \mysub{\ComplCompl}{\beta^e\alpha^e} 
		(\mysub{\ComplCompl}{\alpha^e\alpha^e})^{-1} \mysub{\StateCompl}{\alpha^e\bullet}$, and 
		$\widehat{\ComplCompl} = \mysub{\ComplCompl}{\beta^e\beta^e} - \mysub{\ComplCompl}{\beta^e\alpha^e} 
		(\mysub{\ComplCompl}{\alpha^e\alpha^e})^{-1} \mysub{\ComplCompl}{\alpha^e\beta^e}$.  Thus, the stability analysis of the previous sections can be applied to the homogenous \dlcs\, in~\eqref{dlcsinh2} to verify local stability of $x^e$.

\section{A Cutting-Plane Algorithm for Computing the Lyapunov Function}\label{sec:algorithm}

The sufficient conditions \eqref{feasPxx} and \eqref{feasP} for exponential stability using a CQLF or an EQLF can be written in a general fashion as
\begin{subequations}
\begin{align}
	\text{Find} &\; \bfP \in \Sy^{n}  \\
	\text{ such that} &\; 	\bfP \succ_{\setK_1} 0, \label{feasPgen:setK1} \\
				&\; -\bfM(\bfP)  \succ_{\setK_2} 0 \label{feasPgen:setK2}
\end{align}\label{feasPgen}
\end{subequations}
where $\setK_1$ and $\setK_2$ are closed (not necessarily convex) cones and the matrix $\bfM \in \Sy^m$ depends on the matrix $\bfP$.  The feasibility conditions require that $\bfP$ is strict $\setK_1$-copositive and $-\bfM$ is strict $\setK_2$-copositive.

A typical approach to handling the strict $\setK_1$-copositivity and strict $\setK_2$-copositivity is to relax the conditions into linear matrix inequalities using the S-Lemma~\cite{Aydinoglu2020,BoydLMI}.  The transformation to such a semidefinite formulation for the CQLF and EQLF is provided in Appendix~\ref{sec:appendix}.  Our intent is to provide an exact algorithm which we describe next.

\subsection{Cutting Plane Algorithm}\label{sec:cuttingplane}

We state by recalling a result from~\cite{BundfussDurLyapunov2009} that bounds the set of test vectors for copositivity with respect to a cone.  
\begin{lemma}\cite[Lemma 1]{BundfussDurLyapunov2009}\label{lemma:normBall}
Let $\| \cdot \|_1$ be the 1-norm on $\R^{n}$ and $\setC \subset \R^n$ be a closed cone.  Then
\[
	N \succcurlyeq_{{\cal C}} 0 \iff z^TNz \geq 0 \text{ for all } z \in {\cal C}, \|z\|_1 = 1.
\]
An analogous results holds for $N \succ_{\cal C} 0$.
\end{lemma}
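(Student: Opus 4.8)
The plan is to prove both directions by exploiting the positive homogeneity of degree two of the quadratic form $z \mapsto z^T N z$, whose sign is therefore invariant under positive rescaling, together with the fact that $\setC$ is a cone and hence closed under multiplication by positive scalars. The choice of the $1$-norm is immaterial to the argument---any norm would do---and is made only for later algorithmic convenience, since $\|z\|_1 = 1$ is a piecewise-linear constraint.

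The forward implication is immediate: if $N \succcurlyeq_{\setC} 0$, then by definition $z^T N z \geq 0$ for every $z \in \setC$, and in particular for every $z \in \setC$ with $\|z\|_1 = 1$, as these form a subset of $\setC$. For the reverse implication I would take an arbitrary $u \in \setC$ and establish $u^T N u \geq 0$. The case $u = 0$ gives $u^T N u = 0 \geq 0$ directly. For $u \neq 0$ one has $\|u\|_1 > 0$; setting $z := u / \|u\|_1$, we get $\|z\|_1 = 1$, and since $\setC$ is a cone, $z \in \setC$. The hypothesis then yields $z^T N z \geq 0$, and because $u^T N u = \|u\|_1^2\, z^T N z$ with $\|u\|_1^2 > 0$, we conclude $u^T N u \geq 0$. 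This gives $N \succcurlyeq_{\setC} 0$. The strict analog follows from the identical scaling identity restricted to nonzero vectors: for any nonzero $u \in \setC$, writing $z = u / \|u\|_1$ gives $u^T N u = \|u\|_1^2\, z^T N z > 0$ whenever $z^T N z > 0$ on the unit-norm slice.

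I do not anticipate any genuine obstacle: the whole content is the degree-two homogeneity of the quadratic form combined with the cone property of $\setC$. It is worth remarking that the closedness of $\setC$ assumed in the statement plays no role in this equivalence---any cone suffices---though closedness does become relevant elsewhere, for instance in guaranteeing compactness of the unit-norm slice $\{z \in \setC : \|z\|_1 = 1\}$ when this set is used to certify or separate copositivity in the cutting-plane algorithm that follows.
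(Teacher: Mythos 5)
Your proof is correct: the paper itself gives no proof of this lemma, citing it directly from Bundfuss and D\"ur, and your argument---restriction for the forward direction, and for the converse the rescaling $z = u/\|u\|_1$ combined with the cone property and the degree-two homogeneity identity $u^T N u = \|u\|_1^2\, z^T N z$---is exactly the standard argument in that reference, including the correct handling of $u=0$ in the non-strict case and the restriction to nonzero vectors in the strict case. Your side observation that closedness of $\setC$ is irrelevant to the equivalence itself and matters only later (compactness of the slice $\setC \cap \B^n_1$ in the separation problems) is also accurate.
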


Thus, the feasibility problem~\eqref{feasPgen} can be equivalently cast as the semi-infinite optimization problem
\begin{subequations} 
\begin{align}
	\max\limits_{\mu,\bfP \in \Sy^{n}}	&\;	\mu 		\label{sicpPgen:obj}\\
	\text{s.t.}							&\;	u^T\bfP u \,\,\,\,\geq \mu \,\forall\, u \in \setU \label{sicpPgen:c1} \\
									&\;	-v^T \bfM(\bfP) v \geq \mu	\,\forall\, v \in \setV \label{sicpPgen:c2} 
\end{align}\label{sicpPgen}
\end{subequations}
where $\setU = \setK_1 \cap \Sphere^n_1$ and $\setV = \setK_2 \cap \Sphere^m_1$.   If the optimal objective value of~\eqref{sicpPgen} is positive then the origin is exponentially stable for~\eqref{dlcs}.  The presence of infinite number of constraints poses a difficulty for computation. We propose to solve~\eqref{sicpPgen} using a cutting plane algorithm which alternates between a \emph{master problem} and two \emph{separation problems}.  At an iteration $l$ of the cutting plane algorithm the master problem attempts to find a matrix $\bfP^l$ for which the inequalities~\eqref{sicpPgen:c1}-\eqref{sicpPgen:c2} hold over finite sets $\setU^l, \setV^l$. The separation problem verifies if the computed $\bfP^l$ satisfies the inequalities in~\eqref{sicpPgen:c1}-\eqref{sicpPgen:c2} for all vectors in $\setU$ and $\setV$.  If satisfied, then $\bfP^l$ is a matrix for which~\eqref{feasPgen} holds.  If not, then the separation problem identifies points $u^l, v^l$ for which the inequalities~\eqref{feasPgen:setK1}-\eqref{feasPgen:setK2} fail to hold. These points are appended to the sets $\setU^l, \setV^l$, and the algorithm continues.

The \emph{master problem} associated with~\eqref{sicpPgen} is
\begin{subequations}
\begin{align}
	\max\limits_{\mu,\bfP \in \Sy^{n}}	&\;	\mu 		\label{sicpPgen-mp:obj} \\
	\text{s.t.}							&\;	u^T\bfP u \geq \mu \,\forall\, u \in \setU^l \label{sicpPgen-mp:c1} \\
									&\;	-v^T \bfM(\bfP) v \geq \mu	\,\forall\, v \in \setV^l \label{sicpPgen-mp:c2} \\
									&\;	-1 \leq \mysub{\bfP}{ij} \leq 1 \,\forall\, i,j = 1,\ldots,n.	\label{sicpPgen-mp:c3}
\end{align}\label{sicpPgen-mp}
\end{subequations}
The constraint~\eqref{sicpPgen-mp:c3} is a normalization constraint and does not affect the feasible region due to the homogeneity of~\eqref{feasPgen}.  Specifically, if $\hat{\bfP}$ satisfies~\eqref{feasPgen} then $\hat{\bfP}/(\max_{i,j} \mysub{\hat{\bfP}}{ij})$ also satisfies~\eqref{feasPgen}.  From a computational standpoint~\eqref{sicpPgen-mp:c3} serves to bound the feasible region and ensures that an optimal solution to the linear program~\eqref{sicpPgen-mp} always exists as long as $\setU^l \neq \emptyset$ as we show in the following.  
Note that from ~\eqref{sicpPgen-mp} we have that 
\begin{align}
	\mu = \max_{\bfP :  |\mysub{\bfP}{ij}| \leq 1} \min \left\{ \min_{u \in \setU^{l}} u^T\bfP u ,
	 \min_{v \in \setV^{l}} -v^T\bfM(\bfP) v
	\right\}. \label{mubndbyminU}
\end{align}
Thus, if $\setU^l \neq \emptyset$, an optimal solution to~\eqref{sicpPgen-mp} must have optimal value at most $n$, since 
\[\begin{aligned}
	\mu \leq&\; 	\max_{\bfP :  |\mysub{\bfP}{ij}| \leq 1} \max_{u \in \setU^{l}} u^T\bfP u 
		\leq	\max_{\bfP :  |\mysub{\bfP}{ij}| \leq 1 , u \in \Sphere^{n}_1} u^T\bfP u 
		\leq \max_{ u \in \Sphere^{n}_1} \sum\limits_{ij} u_iu_j 
		\leq n.
\end{aligned}\]
Since $\bfP = 0, \mu = 0$ is a feasible solution to~\eqref{sicpPgen-mp}, the master linear program must have an optimal solution, which we denote as $(\mu^l,\bfP^l)$.

The \emph{separation problem} associated with~\eqref{sicpPgen-mp} for the cone $\setK_1$ (constraints~\eqref{sicpPgen-mp:c1}) is 
\begin{subequations}
\begin{align}
	\min\limits_{\nu_1\in \R, u \in \R^n}	
							&\;	\nu_1 		\label{sicpPgen-spu:obj} \\
	\text{s.t.}					&\;	u^T\bfP^l u \leq \nu_1 \label{sicpPgen-spu:c1} \\
							&\;	u \in \setK_1 \cap \Sphere^n_1. \label{sicpPgen-spu:c2}
\end{align}\label{sicpPgen-spu}
\end{subequations}
The \emph{separation problem} associated with~\eqref{sicpPgen-mp} for the cone $\setK_2$ (constraints~\eqref{sicpPgen-mp:c1}) is
\begin{subequations}
\begin{align}
	\min\limits_{\nu_2 \in \R, v \in \R^m}	
							&\;	\nu_2 		\label{sicpPgen-spv:obj} \\
	\text{s.t.}					&\;	-v^T\bfM(\bfP^l) v \leq \nu_2 \label{sicpPgen-spv:c1} \\
							&\;	v \in \setK_2 \cap \Sphere^m_1. \label{sicpPgen-spv:c2}
\end{align}\label{sicpPgen-spv}
\end{subequations}
In the separation problems in~\eqref{sicpPgen-spu} and~\eqref{sicpPgen-spv} we have included the $\setK_1 \cap \Sphere^n_1$ and $\setK_2 \cap \Sphere^m_1$ constraint without specifying a formulation. We provide a mixed integer programming formulation for the CQLF conditions~\eqref{feasPxx} in \S~\ref{sec:CQLFalgorithm} and for the EQLF conditions~\eqref{feasP} in \S ~\ref{sec:EQLFalgorithm}.

Algorithm~\ref{algo:CPPgen} summarizes the steps in the cutting plane algorithm.  The steps of solving the master and separation problems at each iteration of the algorithm are described in Lines~\ref{algo:l5}-\ref{algo:l15}.   The algorithm aims to find $\bfP$, $u \in \Sphere^n_1$, $v \in \Sphere^m_1$ satisfying
\begin{equation}
u^T \bfP u \geq \epsilon \ \forall u \in \setK_1, \ -v^T \bfM(\bfP) v \geq \epsilon \ \forall v \in \setK_2, \ \eqref{sicpPgen-mp:c3}    \label{feasPgen-eps}
\end{equation}
for an appropriately small positive value $\epsilon$.
Because the inequality system \eqref{feasPgen} is homogeneous, if the strict inequality system
\begin{equation}
\bfP \succ_{\setK_1} 0,\, -\bfM(\bfP) \succ_{\setK_2} 0,\, \eqref{sicpPgen-mp:c3}    \label{feasPgen-0}
\end{equation}
has a solution $\bfP$, then by scaling the vectors $u$ and $v$, the value of the positive quadratic forms $u^T \bfP u$ and $-v^T \bfM(\bfP) v$ can be made arbitrarily close to zero, and thus for numerical computing purposes, we use a small positive tolerance $\epsilon$. 

\begin{algorithm}
\caption{A cutting plane algorithm to find $\bfP$ satisfying~\eqref{feasPgen}}
\label{algo:CPPgen}
\begin{algorithmic}[1]
\STATE \textbf{Input}: $\epsilon > 0$, MaxIter. \label{algo:l1}
\STATE \textbf{Output}: $\bfP$ satisfying~\eqref{feasPgen} if one exists. \label{algo:l2}
\STATE Set $l = 0$. Initialize the sets $\setU^0 \subset \setU$ and $\setV^0 \subset \setV$. \label{algo:l3}
\WHILE{$l < \text{MaxIter}$} \label{algo:l4}
\STATE Solve~\eqref{sicpPgen-mp} to obtain optimal solution $(\mu^l,\bfP^l)$. \label{algo:l5}
\IF{$\mu^l < \epsilon$} \label{algo:l6}
	\STATE No solution exists satisfying user-specified tolerances. Terminate. \label{algo:l7}
\ENDIF
\STATE Solve~\eqref{sicpPgen-spu} and~\eqref{sicpPgen-spv} using $\bfP^l$ to obtain optimal solutions $(\nu_1^l,u^l)$ and $(\nu_2^l,v^l)$ respectively. \label{algo:l8}
\IF{$\min(\nu_1^l,\nu_2^l) \geq \epsilon$} \label{algo:l9}
	\STATE Return $\bfP^l$ as a feasible solution. Terminate. \label{algo:l10}
\ENDIF 
\IF{$\nu_1^l < \epsilon$} \label{algo:l11}
	\STATE $\setU^{l+1} \leftarrow \setU^l \cup \{u^l\}$ \label{algo:l12}
\ENDIF
\IF{$\nu_2^l < \epsilon$} \label{algo:l13}
	\STATE $\setV^{l+1} \leftarrow \setV^l \cup \{v^l\}$ \label{algo:l14}
\ENDIF \label{algo:l15}
\STATE Set $l \leftarrow l + 1$.
\ENDWHILE
\end{algorithmic}
\end{algorithm}

\noindent It is easy to see that if on solving~\eqref{sicpPgen-mp} (in Line~\ref{algo:l5}) $\mu^ l < \epsilon$ then~\eqref{sicpPgen} is infeasible.

\begin{lemma}\label{lemma:sicpPgenInfeasible}
Suppose $\mu^l < \epsilon$ at some iteration $l$ of Algorithm~\ref{algo:CPPgen}.  Then there exists no $\bfP$ in~\eqref{sicpPgen-mp:c3} satisfying~\eqref{feasPgen-eps}.
\end{lemma}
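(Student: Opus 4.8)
The plan is to prove the contrapositive by a simple relaxation argument: the master linear program~\eqref{sicpPgen-mp} imposes the copositivity inequalities only at the finitely many sampled points collected in $\setU^l$ and $\setV^l$, whereas any matrix satisfying~\eqref{feasPgen-eps} must satisfy them at \emph{all} points of $\setU$ and $\setV$. The first thing I would record is the set inclusion $\setU^l \subseteq \setU$ and $\setV^l \subseteq \setV$ at every iteration $l$: the initialization in Line~\ref{algo:l3} takes $\setU^0 \subset \setU$, $\setV^0 \subset \setV$, and the only vectors ever inserted are the separation points $u^l, v^l$, which by the constraints~\eqref{sicpPgen-spu:c2} and~\eqref{sicpPgen-spv:c2} lie in $\setK_1 \cap \B^n_1 = \setU$ and $\setK_2 \cap \B^m_1 = \setV$, respectively. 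Consequently the master problem is a relaxation of the condition~\eqref{feasPgen-eps}, and dropping constraints can only increase the optimal value.

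Concretely, I would assume toward a contradiction that some $\bfP$ satisfies~\eqref{feasPgen-eps}. Following the discussion surrounding that display, I read~\eqref{feasPgen-eps} as
\[
u^T\bfP u \geq \epsilon \ \forall u \in \setU, \qquad -v^T\bfM(\bfP)v \geq \epsilon \ \forall v \in \setV, \qquad |\mysub{\bfP}{ij}| \leq 1 \ \forall\, i,j,
\]
i.e. the quantifiers range over the normalized cones $\setU = \setK_1 \cap \B^n_1$ and $\setV = \setK_2 \cap \B^m_1$ rather than the full cones (the literal reading ``$\forall u \in \setK_1$'' is excluded, since $0 \in \setK_1$ would force $0 \geq \epsilon$). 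Restricting these quantifiers to the subsets $\setU^l \subseteq \setU$ and $\setV^l \subseteq \setV$ shows that the pair $(\mu,\bfP) = (\epsilon,\bfP)$ satisfies~\eqref{sicpPgen-mp:c1}--\eqref{sicpPgen-mp:c2} with $\mu = \epsilon$, together with the box constraint~\eqref{sicpPgen-mp:c3}; hence $(\epsilon,\bfP)$ is feasible for the master problem~\eqref{sicpPgen-mp}.

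Since $(\mu^l,\bfP^l)$ is by definition an optimal solution of the maximization~\eqref{sicpPgen-mp} (which, as established just before the lemma, always attains its optimum because $(\mu,\bfP)=(0,0)$ is feasible), feasibility of $(\epsilon,\bfP)$ yields $\mu^l \geq \epsilon$. This contradicts the hypothesis $\mu^l < \epsilon$, so no $\bfP$ of the form~\eqref{sicpPgen-mp:c3} satisfying~\eqref{feasPgen-eps} can exist.

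There is essentially no hard step: the entire content is the inclusion $\setU^l \subseteq \setU$, $\setV^l \subseteq \setV$ (guaranteed by the construction of the algorithm) combined with monotonicity of the optimal value under removing constraints. The only point that warrants care is interpreting the quantifiers in~\eqref{feasPgen-eps} over the \emph{normalized} cones $\setU, \setV$ rather than the full cones $\setK_1, \setK_2$, since that is precisely what makes the uniform lower bound $\epsilon$ meaningful and aligns the sampled inequalities of the master problem with the target feasibility condition.
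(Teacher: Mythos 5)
Your proof is correct and follows essentially the same route as the paper: both arguments rest on the inclusions $\setU^l \subseteq \setU$, $\setV^l \subseteq \setV$ and monotonicity of the optimal value under dropping constraints, with the paper phrasing it directly through the max-min representation~\eqref{mubndbyminU} and you phrasing it as a contradiction via feasibility of $(\epsilon,\bfP)$ in the master problem. Your clarification that the quantifiers in~\eqref{feasPgen-eps} must range over the normalized sets $\setU$ and $\setV$ (since $0 \in \setK_1$ would otherwise make the condition vacuous) is a sound reading that matches the paper's intended meaning and is, if anything, more careful than the paper's own statement.
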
  
\begin{proof}
If $\mu^l < \epsilon$ then by~\eqref{mubndbyminU} 
\begin{align*}
	&\; \max_{\bfP :  |\mysub{\bfP}{ij}| \leq 1} \min \left\{ \min_{u \in \setU^{l}} u^T\bfP u ,
	 \min_{v \in \setV^{l}} -v^T\bfM(\bfP) v
	\right\} < \epsilon \\
	\implies &\; \max_{\bfP :  |\mysub{\bfP}{ij}| \leq 1} \min \left\{ \min_{u \in \setU} u^T\bfP u ,
	 \min_{v \in \setV} -v^T\bfM(\bfP) 
	\right\} < \epsilon
\end{align*}
where the second inequality follows by noting that the minimization is over larger sets $\setU^l \subseteq \setU$ and $\setV^l \subseteq \setV$.  Hence, there exists no $\bfP$ satisfying~\eqref{feasPgen-eps}, which proves the claim.
\end{proof}

Thus, if the Algorithm~\ref{algo:CPPgen} terminates before hitting the maximum iteration limit, it either terminates with a 
$\bfP \in \Sy^{n}$ such that $|\mysub{\bfP}{ij}| \leq 1$, $\bfP \succcurlyeq_{\setK_1} \epsilon$ and $-\bfM(\bfP)  \succcurlyeq_{\setK_2} \epsilon$, or a proof that no such matrix $\bfP$ exists.  If the iteration limit is reached, no conclusions on the stability of the system can be drawn.

\subsection{Common Quadratic Lyapunov Function}\label{sec:CQLFalgorithm}

To state the sufficiency conditions \eqref{feasPxx} required for a CQLF in the general form~\eqref{feasPgen}, we set $n = n_x$, $\bfP = P_{xx}$, $\setK_1 = \R^{n_x}$, $m = (n_x+n_c)$, $\bfM = M(P_{xx})$ in~\eqref{defMmatrix}, and $\setK_2 = \setK = \gr\,\sol{\StateCompl x}{\ComplCompl} \subseteq \R^{n_x+n_c}$.  
In the separation problem for a CQLF, we are given a matrix $P_{xx}^l$, and we wish to verify if the conditions \eqref{feasPxx} hold for this matrix.  Thus, we seek a vector $w = (x,\lambda) \in \Sphere^{n_x+n_c}_1$ such that either 
$w^T P_{xx}^l w \leq 0$ or $-w^T M(P_{xx}^l) w \leq 0$ for some $(x,\lambda) \in \gr\,\sol{\StateCompl x}{\ComplCompl}$.  If no such vector exits, then we have verified that $P_{xx}^l$ satisfies conditions~\eqref{feasPxx}, else the associated vector may be added to the appropriate set $\setU^l$ or $\setV^l$ in the master problem~\eqref{sicpPgen-mp}.

The separation problem for the constraint $P_{xx} \succ 0$ 
can be posed as the following mixed integer quadratic program, which minimizes the value of the quadratic form $x^T P_{xx}^l x$ over a 1-norm ball:
\begin{subequations}
\begin{align}
    \min\limits_{x,y, x^+,x^-,\nu} &\;\nu \label{eq:cqlf-s-obj}\\
    \text{s.t}  \qquad                    x^T P_{xx}^l x &\leq \nu \label{eq:cqlf-s-obj2}\\
      x &= x^+ - x^- \label{eq:cqlf-s-1-split}\\
     \mathbf{1}^T(x^+ + x^- ) &= 1 \label{eq:cqlf-s-1norm}\\
     0 \leq x^+ &\leq y \label{eq:cqlf-s-splitbin1}\\
     0 \leq x^- &\leq \mathbf{1} - y \label{eq:cqlf-s-splitbin2}\\
     y &\in \{0,1\}^{n_x} \label{eq:cqlf-s-bin1},
\end{align}
\label{eq:cqlf-sep-1}
\end{subequations}
where $\mathbf{1}$ is a vector of all ones.  

The separation problem for the constraint $-M(P_{xx}^l) \succcurlyeq_{\setK} 0$ can be posed as the following mixed integer quadratic program in variables $w=(x,\lambda) \in \mathbb{R}^{n_x+n_c}$, binary variables $y \in \{0,1\}^{n_x}$, $z \in \{0,1\}^{n_c}$ and auxiliary variables $x^+, x^- \in \mathbb{R}^{n_x}$:
\begin{subequations}
\begin{align}
    \min\limits_{x,\lambda,y, x^+,x^-,\nu} &\;\nu \label{eq:cqlf-obj}\\
    \text{s.t}  \qquad                    - (x,\lambda)^T M(P_{xx}^l) (x,\lambda) &\leq \nu \label{eq:cqlf-obj2}\\
      x &= x^+ - x^- \label{eq:cqlf-split}\\
     \mathbf{1}^T(x^+ + x^- + \lambda) &= 1 \label{eq:cqlf-1norm}\\
     0 \leq x^+ &\leq y \label{eq:cqlf-splitbin1}\\
     0 \leq x^- &\leq \mathbf{1} - y \label{eq:cqlf-splitbin2}\\
     y &\in \{0,1\}^{n_x} \label{eq:cqlf-bin1}\\
     0 \leq \lambda &\leq z \label{eq:cqlfk1}\\
     0 \leq \sum_{j=1}^{n_x} \mysub{D}{ij} \mysub{x}{j} + \sum_{j=1}^{n_c} \mysub{F}{ij} \mysub{\lambda}{j}  &\leq \theta_i (1-\mysub{z}{i}) \quad \forall i=1,\ldots,n_c \label{eq:cqlfk2}\\
     z &\in \{0,1\}^{n_c} \label{eq:cqlf-bin2},
\end{align}
\label{eq:cqlf-sep-2}
\end{subequations}
where ${\bf 1}$ is a vector of all ones of conformal dimension, and the constant
\[ \theta_i = \max  \left( \max_j \left| \mysub{D}{ij} \right| , \max_k \left( \mysub{F}{ik} \right) \right) \]
provides an upper bound on the value of the left hand side of constraint~\eqref{eq:cqlfk2} for any $(x,\lambda) \in \Sphere^{n_x+n_c}_1$.
The objective~\eqref{eq:cqlf-obj} and constraint~\eqref{eq:cqlf-obj2} seek to minimize the value of the (possibly nonconvex) quadratic function $(x,\lambda)^T M(P_{xx}^l) (x,\lambda)$.  Constraints~\eqref{eq:cqlf-split}---\eqref{eq:cqlf-bin1} ensure that $(x,\lambda) \in \Sphere^{n_x+n_c}_1$, and constraints~\eqref{eq:cqlfk1}---\eqref{eq:cqlf-bin2} enforce the conditions that $(x,\lambda) \in \gr\,\sol{\StateCompl x}{\ComplCompl}$.

\subsection{Extended Quadratic Lyapunov Function}\label{sec:EQLFalgorithm}

To state the sufficiency conditions for the EQLF in Theorem~\ref{thm:EQLF} in terms of the general setting of~\eqref{feasPgen}, we can set $n = (n_x+n_c)$, $\bfP = P$, $\setK_1 = \setK$, $m = (n_x+2n_c)$, $\bfM = \widehat{M}(P)$, and $\setK_2 = \widehat{\setK}$ where $\widehat{{\cal K}} = \{ (x,\lambda,\hatlambda) \,|\, (x,\lambda) \in \gr \sol{\StateCompl x}{\ComplCompl},\, \hatlambda \in \sol{\StateCompl(A x + \ComplState \lambda)}{\ComplCompl} \}$.  In the separation problem for an EQLF, we are given a matrix $P^l$, and we seek a vector $v = (x,\lambda,\hat{\lambda}) \in \mathbb{B}^{n_x + 2 n_c}_1$ such that either
\begin{align*}
    (i) \quad (x,\lambda)^T P^l (x,\lambda) < 0 & \mbox{ for some } (x,\lambda) \in \gr\,\sol{\StateCompl x}{\ComplCompl}; \mbox{ or }\\
    (ii) - (x,\lambda,\hat{\lambda})^T \widehat{M}(P^l) (x,\lambda,\hat{\lambda}) < 0 & \text{ for some } (x,\lambda) \in \gr \,\sol{\StateCompl x}{\ComplCompl} \\
    & \text{ and } \hatlambda \in \sol{\StateCompl(A x + \ComplState \lambda)}{\ComplCompl}.  
\end{align*}

The separation problem for the condition (i) ($P \succ_{\setK} 0$) is very similar to \eqref{eq:cqlf-sep-2} and thus not repeated here.  The separation problem for condition (ii) ($\widehat{M}(P) \succ_{\widehat{\setK}} 0$) can be posed as the following quadratic mixed integer program:
\begin{subequations}
\begin{align}
 \min\limits_{x,\lambda,\hat{\lambda},y,z,w,x^+,x^-,\nu} & \; \nu \label{eq:eqlf-obj}\\
  \text{s.t}  \qquad      (x,\lambda,\hat{\lambda})^T \widehat{M}(P^l) (x,\lambda,\hat{\lambda}) &\leq \nu \label{eq:eqlf-obj2}\\
      x &= x^+ - x^- \label{eq:eqlf-split}\\
     \mathbf{1}^T(x^+ + x^- + \lambda + \hat{\lambda}) &= 1 \label{eq:eqlf-1norm}\\
     0 \leq x^+ &\leq y \label{eq:eqlf-splitbin1}\\
    0 \leq x^- &\leq \mathbf{1} - y \label{eq:eqlf-splitbin2}\\
     y &\in \{0,1\}^{n_x} \label{eq:eqlf-bin1}\\
     0 \leq \lambda &\leq z \label{eq:eqlfk1}\\
     0 \leq \sum_{j=1}^{n_x} \mysub{D}{ij} \mysub{x}{j} + \sum_{j=1}^{n_c} \mysub{F}{ij} \mysub{\lambda}{j}  &\leq \theta_i (1-\mysub{z}{i}) \quad \forall i=1,\ldots,n_c \label{eq:eqlfk2}\\
     z &\in \{0,1\}^{n_c} \label{eq:eqlf-bin2}\\
     0 \leq \hat{\lambda} &\leq w \label{eq:eqlf-lamhat1}\\
     0 \leq  \sum_{j=1}^{n_x} \mysub{DA}{ij} \mysub{x}{j} + \sum_{j=1}^{n_c} \mysub{DC}{ij} \mysub{\lambda}{j} + \sum_{j=1}^{n_c} \mysub{F}{ij} \mysub{\hat{\lambda}}{j} &\leq \Theta_i (1 - \mysub{w}{i}) \quad \forall i=1,\ldots,n_c \label{eq:eqlf-lamhat2}\\
     w &\in \{0,1\}^{n_c}, \label{eq:eqlf-bin3} 
\end{align}
\label{eq:eqlf-sep-2}
\end{subequations}
where $\Theta_i = \max  \left( \max_j \left| \mysub{DA}{ij} \right| , \max_j \left( \mysub{F}{ij} \right), \max_j \left( \mysub{DC}{ij} \right) \right)$ provides an upper bound on the value of the left-hand-side of~\eqref{eq:eqlf-lamhat2} for any feasible solution to the problem.  The objective~\eqref{eq:eqlf-obj} and constraint~\eqref{eq:eqlf-obj2} minimize the nonconvex quadratic function $(x,\lambda,\hat{\lambda})^T \widehat{M}(P^l) (x,\lambda,\hat{\lambda})$.  Constraints~\eqref{eq:eqlf-split}---\eqref{eq:eqlf-bin1} ensure that $(x,\lambda,\hat{\lambda} \in \mathbb{B}^{n_x + 2 n_c}_1$. The constraints~\eqref{eq:eqlfk1}---\eqref{eq:eqlf-bin2} enforce that $(x,\lambda) \in \gr\,\sol{\StateCompl x}{\ComplCompl}$, and the constraints~\eqref{eq:eqlf-lamhat1}---~\eqref{eq:eqlf-bin3} force that $\hatlambda \in \sol{\StateCompl(A x + \ComplState \lambda)}{\ComplCompl}.$

\
\section{Numerical Experiments}\label{sec:numerics}

We implemented Algorithm~\ref{algo:CPPgen} for the computation of CQLF and EQLF in Python and solved the master problems and separation problems using Gurobi 9.0.2~\cite{gurobi}.  All experiments were run on a Dual-Core Macbook Pro with an i5 processor clocked at 3.1GHz. In our experiments, we chose separation tolerance $\epsilon  = 10^{-6}$ in Algorithm~\ref{algo:CPPgen}. In our implementation, we do not always solve the separation problems for the CQLF and EQLF to optimality.  
We terminate the separation early, if the solver finds a feasible solution whose  objective value is less than or equal to zero.  In this case, the incumbent solution is a vector that can be used to separate the current master problem solution and can be added to the set of vectors ($\setU^l$ or $\setV^l$) in the master problem~\eqref{sicpPgen-mp}.  To initialize the sets $\setU^0$ and $\setV^0$ in our algorithm, we use the $n_x$ unit vectors $e_i$ for initial state variables, and complementarity variables implied by these state variables:
\begin{subequations}
	\begin{align}
	\setU^0 &:= \{ e_1, e_2, \ldots e_n\}\\
	\setV^0_c &:= \{ (e_1, \beta_1), (e_2, \beta_2), \ldots (e_n, \beta_n)\},\\	
	\setV^0_e &:= \{ (e_1, \beta_1, \hat{\beta}_1), (e_2, \beta_2, \hat{\beta}_2), \ldots (e_n, \beta_n, \hat{\beta}_n)\},
	\end{align}
\end{subequations}
where $\beta_i \in \gr\,\sol{\StateCompl e_i}{\ComplCompl}$, and $\hat{\beta}_i \in \sol{\StateCompl(A e_i + \ComplState \beta_i)}{\ComplCompl}$ for all $i=1,\ldots,n_x$.  For CQLF, we initialize with the sets $\setU^0$ and $\setV^0_c$.  For EQLF, the algorithm is initialized with $\setU^0$ and $\setV^0_e$.


\subsection{\dlcs\, derived from LCS with $\tComplCompl$ a P-matrix}
We consider the 3 LCS examples from~\cite{CamlibelPangShen2006} (Examples 3.1-3.3) and another example from Heemels, Schumacher and Weiland~\cite[\S2]{HeemelsSchumacher2000}. 
\begin{itemize}
    \item \texttt{cam31} is Example~3.1~\cite{CamlibelPangShen2006} with data matrices:   
    $\tA = \begin{psmallmatrix} 1.0 \end{psmallmatrix}$, 
    $\tComplState = \begin{psmallmatrix} 2 & -2 \end{psmallmatrix}$,  
    $\tStateCompl = \begin{psmallmatrix} 1 \\ -1 \end{psmallmatrix}$, 
    $\tComplCompl = \begin{psmallmatrix} 1 & 3 \\ 0 & 1 \end{psmallmatrix}$. 
    \item \texttt{cam32} is Example~3.2~\cite{CamlibelPangShen2006} with data matrices:   
    $\tA = \begin{psmallmatrix} -1.0 \end{psmallmatrix}$, 
    $\tComplState = \begin{psmallmatrix} 0 & 1 \end{psmallmatrix}$,  
    $\tStateCompl = \begin{psmallmatrix} 1 \\ 1 \end{psmallmatrix}$, 
    $\tComplCompl = \begin{psmallmatrix} 1 & 3 \\ 0 & 1 \end{psmallmatrix}$. 
    \item \texttt{cam33} is Example~3.3~\cite{CamlibelPangShen2006} with data matrices: 
    $\tA = \begin{psmallmatrix} -5 & -4 & 0 \\ -1 & -2 & 0 \\ 0 & 0 & 1 \end{psmallmatrix}$, 
    $\tComplState = \begin{psmallmatrix} -3 & 0 & 0 \\ -21 & 0 & 0 \\ 0 & 2 & -2 \end{psmallmatrix}$,  
    $\tStateCompl = \begin{psmallmatrix} 1 & 0 & 0 \\ 0 & 0 & 1 \\ 0 & 0 & -1 \end{psmallmatrix}$, 
    $\tComplCompl = \begin{psmallmatrix} 1 & 0 & 0  \\ 0 & 1 & 3 \\ 0 & 0 & 1 \end{psmallmatrix}$.
    \item \texttt{hem2} is Example in \S2~\cite{HeemelsSchumacher2000} with data matrices: 
    $\tA = \begin{psmallmatrix} 0 & 0 & 1 & 0 \\ 0 & 0 & 0 & 1 \\ -2 & 1 & 0 & 0 \\ 1 & -1 & 0 & 0 \end{psmallmatrix}$, 
    $\tComplState = \begin{psmallmatrix} 0 \\ 0 \\ 1 \\ 0 \end{psmallmatrix}$, 
    $\tStateCompl = \begin{psmallmatrix} 1 & 0 & 0 & 0 \end{psmallmatrix}$, 
    $\tComplCompl = \begin{psmallmatrix} 1 \end{psmallmatrix}$.
\end{itemize}
Note that in all the examples above the $\tComplCompl$ matrix is a P-matrix. In the following we consider \dlcs\, that are obtained from explicit ($\theta = 0$ in~\eqref{tslcs}) and implicit Euler ($\theta = 1$ in~\eqref{tslcs}) time-stepping schemes.  We considered two possible time-steps of $\tstep = 0.1$ and $0.05$.  For these choices of time-stepping schemes and $\tstep$, it can be verified that $\ComplCompl$ is also a P-matrix. 

Table~\ref{tab:table1} summarizes the results for the CQLF computation while Table~\ref{tab:table2} provides the results for the EQLF.  Table~\ref{tab:table1} shows that the algorithm is successful in identifying a CQLF such that $\mu \geq \epsilon$ for \texttt{cam31} and \texttt{cam32} for both time-steps. The algorithm shows that a CQLF does not exist such that $\mu \geq \epsilon$ for \texttt{cam33} and \texttt{hem2} when using the explicit Euler time-stepping scheme and either time-step choices. When using the implicit Euler scheme \texttt{cam31}, \texttt{cam32} and \texttt{cam33} permit identifying a CQLF for both time-step choices.  As for \texttt{hem2}, a CQLF does not exist for either time-step choices.  
Table~\ref{tab:table2} shows that an EQLF with $\mu \geq 10^{-6}$ can be computed for all the examples except for \texttt{hem2} for either time-steps when using the explicit Euler. When using the implicit Euler an EQLF is computed for all systems. 

In all cases that are stabilized using CQLF (EQLF), we also simulated the system from 100 random initial and verified that the CQLF (EQLF) is indeed decreasing over the trajectories as shown in Theorems~\ref{thm:CQLF} and~\ref{thm:EQLF}.

From the tables it can be seen that the best $\mu$ for different time-steps (whenever feasible) is in proportion to the time-steps. The next section shows that this is indeed true for the CQLF (see Lemma~\ref{lemma:relateM2tM}).  

\begin{table}[h]
    \centering
    \begin{tabular}{|c|c|c|c|c|c|c|c|}
            \hline
            & & \multicolumn{3}{c|}{$\tstep = 0.1$} & \multicolumn{3}{c|}{$\tstep = 0.05$} \\
            \cline{3-8}
        & Name & Best $\mu$ & \# iters. & Time(s) & Best $\mu$ & \# iters. & Time(s) \\
        \hline
        \multirow{4}{*}{\STAB{\rotatebox[origin=c]{90}{Explicit}}}
	& \texttt{cam31}  & $4.348 \cdot 10^{-2}$ & 1 & 0.006 & $2.326 \cdot 10^{-2}$ & 1 & 0.006 \\
	& \texttt{cam32} & $1.0 \cdot 10^{-1}$ & 1 & 0.003 & $5.0 \cdot 10^{-2}$ & 1 & 0.003 \\
	& \texttt{cam33} & $< 10^{-6}$ & 3 & 0.171 & $< 10^{-6}$ & 10 & 0.477 \\
	& \texttt{hem2} & $< 10^{-6}$& 7 & 0.157 & $< 10^{-6}$ & 6 & 0.118 \\
        \hline
        \multirow{4}{*}{\STAB{\rotatebox[origin=c]{90}{Implicit}}}
	& \texttt{cam31}  & $4.762 \cdot 10^{-2}$ & 1 & 0.003 & $2.439 \cdot 10^{-2}$ & 1 & 0.010 \\
	& \texttt{cam32} & $9.091 \cdot 10^{-2}$ & 1 & 0.004 & $4.762 \cdot 10^{-2}$ & 1 & 0.003 \\
	& \texttt{cam33} & $3.941 \cdot 10^{-3}$ & 9 & 0.624 & $4.373 \cdot 10^{-4}$ & 15 & 1.106 \\
	& \texttt{hem2} & $< 10^{-6}$& 22 & 0.720 & $< 10^{-6}$ & 16 & 0.320 \\        
        \hline
    \end{tabular}
    \caption{Summary of results from applying Algorithm~\ref{algo:CPPgen} for the CQLF computation with $\epsilon = 10^{-6}$. Best $\mu$ - refers to the best $\mu$ obtained at termination; \# iters. - number of times the while-loop. in Algorithm~\ref{algo:CPPgen} is executed;  Tot. CPU time (s) - total time for algorithm.}
    \label{tab:table1}
\end{table}

\begin{table}[h]
    \centering
    \begin{tabular}{|c|c|c|c|c|c|c|c|}
            \hline
            & & \multicolumn{3}{c|}{$\tstep = 0.1$} & \multicolumn{3}{c|}{$\tstep = 0.05$} \\
            \cline{3-8}
        & Name & Best $\mu$ & \# iters. & Time(s) & Best $\mu$ & \# iters. & Time(s) \\
        \hline
        \multirow{4}{*}{\STAB{\rotatebox[origin=c]{90}{Explicit}}}
       
        & \texttt{cam31} & $4.188 \cdot 10^{-2}$ & 2 & 0.048 & $4.133 \cdot 10^{-2}$ & 1 & 0.025 \\
        & \texttt{cam32} & $1.766 \cdot 10^{-1}$ & 1 & 0.014 & $4.870 \cdot 10^{-2}$ & 1 & 0.012 \\
        & \texttt{cam33} & $4.859 \cdot 10^{-3}$ & 9 & 1.630 & $1.112 \cdot 10^{-2}$ & 13 & 2.500 \\
        & \texttt{hem2} & $< 10^{-6}$ & 8 & 0.365 & $< 10^{-6}$ & 6 & 0.321 \\
        \hline
        \multirow{4}{*}{\STAB{\rotatebox[origin=c]{90}{Implicit}}}
        & \texttt{cam31} & $8.453 \cdot 10^{-2}$ & 1 & 0.033 & $4.334 \cdot 10^{-2}$ & 1 & 0.029 \\
        & \texttt{cam32} & $1.607 \cdot 10^{-1}$ & 1 & 0.014 & $8.453 \cdot 10^{-2}$ & 1 & 0.017 \\
        & \texttt{cam33} & $1.2 \cdot 10^{-2}$ & 4 & 0.990 & $6.723 \cdot 10^{-3}$ & 8 & 1.501 \\
        & \texttt{hem2} & $9.629 \cdot 10^{-5}$ & 35 & 7.658 & $2.831 \cdot 10^{-5}$ & 33 & 23.152 \\
        \hline
    \end{tabular}
    \caption{Summary of results from applying Algorithm~\ref{algo:CPPgen} for the EQLF computation with $\epsilon = 10^{-6}$. Best $\mu$ - refers to the best $\mu$ obtained at termination; \# iters. - number of times the while-loop. in Algorithm~\ref{algo:CPPgen} is executed;  Tot. CPU time (s) - total time for algorithm.}
    \label{tab:table2}
\end{table}

\subsection{\dlcs\, with $\ComplCompl$ a Q-matrix and R$_0$-matrix}

Stability results in the literature assume that $\ComplCompl$ is a $P$-matrix \cite{Aydinoglu2020,CamlibelPangShen2006}, which ensures that the state trajectory of the system is unique.  
The sufficient conditions for stability of \dlcs\ given in Section~\ref{sec:stability} assume the weaker condition that $\ComplCompl$ is a $Q$-matrix and an $R_0$ matrix. Here, we demonstrate that the cutting-plane method can find stability conditions for these more general systems by considering the \dlcs
\begin{equation}\label{QP0matrix}
    A = \begin{psmallmatrix} 0.5 & 0.25 \\ -0.25 & 0.5 \end{psmallmatrix}, \,
    \ComplState = \begin{psmallmatrix} 3 \\ 5 \end{psmallmatrix}, \,
    \StateCompl = \begin{psmallmatrix} 1.0 & 0.0 \\ 0.0 & 0.0 \end{psmallmatrix}, \,
    \ComplCompl = \begin{psmallmatrix} 1 & -1 \\ 1 & 0 \end{psmallmatrix}.
\end{equation}
Every principal minor of the matrix $\ComplCompl$ is nonnegative, and hence $\ComplCompl$ is a P$_0$-matrix. Further, $\ComplCompl$ is a R$_0$-matrix since $\sol{0}{\ComplCompl} = \{0\}$.  Thus, $\ComplCompl$ is also a Q-matrix.  Running our cutting-plane algorithm proves that the system cannot be stabilized using a CQLF.
However, the system in~\eqref{QP0matrix} can be stabilized using the EQLF with $\mu = 4.057 \cdot 10^{-3}$ and 
\[
P = \begin{psmallmatrix}
5.33238468 \cdot 10^{-1} & -1.79873472 \cdot 10^{-1} & -1.0 & -4.13688777 \cdot 10^{-2} \\
-1.79873472 \cdot 10^{-1} & 1.70818637 \cdot 10^{-1} &  2.78264033 \cdot 10^{-1} & -8.65180851 \cdot 10^{-4} \\
-1.0  & 2.78264033 \cdot 10^{-1}  & 1.0 & -1.0 \\
 -4.13688777 \cdot 10^{-2} & -8.65180851 \cdot 10^{-4} & -1.0 & 7.53616763 \cdot 10^{-2}
 \end{psmallmatrix}.
\]

\section{LCS and \dlcs\, Stability}\label{sec:lcsdlcsstability}

{\c C}amlibel, Pang and Shen~\cite[Theorem~3.1]{CamlibelPangShen2006} present sufficient conditions for stability using an EQLF for the LCS~\eqref{lcs}.  In this section, we consider the \dlcs~\eqref{dlcs} obtained from the discretization of the LCS~\eqref{lcs} for some $\theta \in [0,1]$ in~\eqref{tslcs1}.  Recall from \eqref{tslcs1} that by defining the matrix $\tAthetah = (I_{n_x} - \theta \tstep \tA)$, the time-stepping formulation of the LCS~\eqref{tslcs1} can be identified with the \dlcs\, as
\begin{equation}
\begin{aligned}
	& A =  I_{n_x} + \tstep \tAthetah^{-1} \tA, \, &&
	\ComplState = \tstep \tAthetah^{-1} \tComplState, \, \\
	& \StateCompl = \tStateCompl (I_{n_x} +\tstep \tAthetah^{-1} \tA),\, &&
	\ComplCompl =\tComplCompl + \tstep \tStateCompl \tAthetah^{-1} \tComplState.
\end{aligned}\label{tslcs2dlcs}
\end{equation}
The stability results in~\cite[Theorem~3.1]{CamlibelPangShen2006} are obtained under the assumption that $\tComplCompl$ is a P-matrix.  Accordingly, we assume in the rest of the section that $\tComplCompl$ is a P-matrix. 
The main result presented in this section is to show that the LCS is 
exponentially stable using a CQLF (EQLF) if and only if the \dlcs\, is exponentially stable using a CQLF (EQLF) for all time steps $\tstep$ sufficiently small and any $\theta \in [0,1]$.

As such, in this section, we assume that the time step $\tstep$ in the solution scheme is chosen such that 
\begin{equation}
\tstep \theta \cdot \|\tA\| < 1,\, \tstep \cdot \|\tAthetah^{-1} \tA\| < 1, \text{ and } \ComplCompl \text { is both a Q-matrix and an R$_0$-matrix}. \label{bndOnDeltat}
\end{equation}
Note that the first inequality in~\eqref{bndOnDeltat} ensures that $\tAthetah^{-1}$ exists while the second inequality ensures that $A$ is invertible.  Both of these conditions can be ensured for all $\tstep$ sufficiently small.  Finally, since we assume that $\tComplCompl$ is a P-matrix, then there must exist $\tstep$ sufficiently small so that $\ComplCompl =  \tComplCompl + \tstep \tStateCompl \tAthetah^{-1} \tComplState$ is also a P-matrix, so it is also both a Q-matrix and an R$_0$-matrix.

To show equivalence of stability for the LCS and DLCS, we require the following lemma relating the inverse of matrices involved in their dynamics. 

\begin{lemma}\label{lemma:boundmatrix}
Let $\tA \in \R^{n_x \times n_x}$, $\tAthetah := I - \theta \tstep \cdot \tA$, and $A := I + \tstep \cdot \tAthetah^{-1} \tA$ be such that $\theta \tstep \cdot \|\tA\| < 1$ and $\tstep \cdot \|\tAthetah^{-1} \tA\| < 1$.  Then 
\begin{equation}
\label{eq:boundnormmatrixinverse}
A^{-1} = I - h \cdot \tA + h^2 \cdot R_{\tA}
\end{equation}
for some matrix $R_{\tA}$ with $\|R_{\tA}\| < \infty$.
\end{lemma}
\begin{proof}
    The lemma follows by twice applying the Neumann power series expansion of the matrix inverse $(I-T)^{-1} = \sum_{k=0}^\infty T^k$.  For ease of notation, let $B := \tAthetah^{-1} \tA$.  By assumption, $\tstep \cdot \|B\| < 1$ so the power series expansion implies that
    \begin{equation}
    \label{eq:bml1}
    A^{-1} = (I+\tstep \cdot B)^{-1} = I - \tstep \cdot B + \tstep^2 \cdot Q_1
    \end{equation}
    for some matrix $Q_1$, with $\|Q_1\| < \infty$.  Since $\theta \tstep \cdot \|\tA\| < 1$, we can apply the power series expansion to $\tAthetah^{-1}$ yielding
    \begin{equation}
    \label{eq:bml2}
    B = \tAthetah^{-1} \tA = (I - \theta \tstep \tA)^{-1} \tA = (I + \tstep \theta \cdot \tA + (\tstep \theta)^2 Q_2) \tA
    \end{equation}
    for some matrix $Q_2$ with $\|Q_2\| < \infty$.
    The two applications of the Neumann series together imply that  
    \begin{align*}
    A^{-1} &= I - hB + h^2 Q_1\\
    &= I - h[\tA + \tstep \theta \cdot \tA^2 + (\tstep \theta)^2 \cdot Q_2 \tA] + h^2 \cdot Q_1\\
    &= I - \tstep \cdot \tA + \tstep^2 [Q_1 - \theta \cdot \tA^2 - \tstep \theta^2 \cdot Q_2 \tA].
    \end{align*}
    The matrix $R_{\tA} := Q_1 - \theta \cdot \tA^2 - \tstep \theta^2 \cdot Q_2 \tA$ has bounded norm, which completes the proof.
\end{proof}

\subsection{CQLF Equivalence}\label{sec:cqlfequivalence}

We will begin by stating the conditions for exponential stability of LCS using aCQLF.  Note that~\cite[Theorem~3.1]{CamlibelPangShen2006} states the asymptotic stability of LCS using an EQLF.  Just as in the \dlcs\, (see remarks following Theorem~\ref{thm:EQLF}) we can obtain the stability results for the CQLF after setting certain matrices in the Lyapunov function to zero.

\begin{proposition}\cite[Theorem~3.1]{CamlibelPangShen2006}
The LCS~\eqref{lcs} is asymptotically stable using the CQLF if there exists $\tP_{xx} \in \Sy^{n_x}$ 
such that 
\begin{subequations}
\begin{align}
& \tP_{xx} \succ 0, \mbox{ and } \label{lcscqlf:setK1} \\
& Q(\tP_{xx}) \coloneqq - \begin{pmatrix} 
\tA^T\tP_{xx} + \tP_{xx} \tA & \tP_{xx} \tComplState \\	
\tComplState^T \tP_{xx} & 0 
\end{pmatrix}\succ_{\setC} 0, \label{lcscqlf:setK2} 
\end{align}
where the set ${\setC}  \subseteq \R^{n_x+n_c}$ is defined as
\begin{equation}
\setC = \gr\,\sol{\tStateCompl x}{\tComplCompl}. \label{deftildesetK}
\end{equation}
\end{subequations}
\end{proposition}


We will begin the analysis by relating the cones $\setK$ in \eqref{defK} and $\setC$ in \eqref{deftildesetK}. 

\begin{lemma}\label{lemma:relatecones}
Suppose $\tComplCompl$ is a P-matrix, and let $\tstep$ be such that~\eqref{bndOnDeltat} holds. 
Then the following hold:
\begin{enumerate}[(a)]
    \item If $(x,\lambda) \in \setK$ then $(\tx,\lambda) \in \setC$, where $\tx =  A x + \ComplState \lambda$.
    \item If $(\tx,\lambda) \in {\setC}$  
    then $(x,\lambda) \in \setK$ where $x = A^{-1}\tx - A^{-1} \ComplState \lambda$.
    \item Suppose $(x,\lambda) \in \setK$ and $\tx = A x + \ComplState \lambda$. There exist constants $c_1, c_2 > 0$ such that 
		$c_1 \|( \tx,\lambda )\|  \leq 
		\|( x , \lambda )\| \leq 
		c_2  \|( \tx , \lambda )\|$. 
\end{enumerate}
\end{lemma}
\begin{proof}
Let the matrix $N$ be defined as
\begin{align}
{N} =   \begin{pmatrix}
            A & C \\
            0 & I_{n_c} 
        \end{pmatrix}. \label{defN}
\end{align}
Let $(x,\lambda) \in \R^{n_x+n_c}$ and define $\tx = Ax + \ComplState \lambda$.  From the definition of~\eqref{defN}, we have that 
\begin{align}
\begin{pmatrix} \tx \\ \lambda \end{pmatrix} = N \begin{pmatrix} x \\ \lambda\end{pmatrix}. \label{relatecones}
\end{align}   
The matrix $N$ is invertible with 
$N^{-1} = \begin{psmallmatrix} A^{-1} & -A^{-1}\ComplState \\ 0 & I_{n_c} \end{psmallmatrix}$.  
The claims in (a) and (b) follow immediately.  From the upper triangular representation of $N$, we have that the eigenvalues of $N$ are the eigenvalues of $A$ and $I_{n_c}$.  Hence, $\|N\| \leq \max(\|A\|,1)$ and $\|N^{-1}\| \leq \max(\|A^{-1}\|,1)$.  Combining this with~\eqref{relatecones} yields the claim in (c).
\end{proof}
%

We now present an alternative representation of the matrix-valued function $M(P_{xx})$ defined in~\eqref{defMmatrix} in terms of 
$Q(\tP_{xx})$ from~\eqref{lcscqlf:setK2}.
\begin{lemma}\label{lemma:relateM2tM}
Let $P_{xx} \in \Sy^{n_x}$ be given. Then for all $\tstep > 0$ sufficiently small  
\begin{equation}
M(P_{xx}) = N^T \left( \tstep \cdot Q(P_{xx}) + \tstep^2 \cdot Q_r(P_{xx}) \right) N, \label{relateM2tM}
\end{equation}
where $Q_r(P_{xx})$ is a matrix of bounded norm, and $N = \begin{psmallmatrix} A & C \\ 0 & I_{n_c} \end{psmallmatrix}$.
\end{lemma}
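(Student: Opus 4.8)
The plan is to avoid expanding $M(P_{xx})$ entry by entry and instead exploit a compact algebraic identity. Writing $P_0 = \begin{psmallmatrix} P_{xx} & 0 \\ 0 & 0 \end{psmallmatrix} \in \Sy^{n_x+n_c}$, a direct block multiplication using $N$ from~\eqref{defN} gives
\[
N^T P_0 N = \begin{pmatrix} A^TP_{xx}A & A^TP_{xx}\ComplState \\ \ComplState^TP_{xx}A & \ComplState^TP_{xx}\ComplState \end{pmatrix},
\]
so that, comparing with~\eqref{defMmatrix}, the purely algebraic identity $M(P_{xx}) = N^T P_0 N - P_0$ holds. Since $N$ is invertible (Lemma~\ref{lemma:relatecones}) with $N^{-1} = \begin{psmallmatrix} A^{-1} & -A^{-1}\ComplState \\ 0 & I_{n_c} \end{psmallmatrix}$, conjugating by $N^{-1}$ turns the target~\eqref{relateM2tM} into the equivalent statement
\[
N^{-T} M(P_{xx}) N^{-1} = P_0 - N^{-T} P_0 N^{-1} = \Delta t\, Q(P_{xx}) + \Delta t^2\, Q_r(P_{xx}),
\]
with $Q_r$ bounded; multiplying back by $N^T(\cdot)N$ then recovers the lemma, and the $Q_r$ appearing here is exactly the one inside the conjugation in the statement.

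Next I would evaluate $N^{-T} P_0 N^{-1}$ in closed form using the explicit $N^{-1}$ above, obtaining
\[
N^{-T} P_0 N^{-1} = \begin{pmatrix} A^{-T}P_{xx}A^{-1} & -A^{-T}P_{xx}A^{-1}\ComplState \\ -\ComplState^TA^{-T}P_{xx}A^{-1} & \ComplState^TA^{-T}P_{xx}A^{-1}\ComplState \end{pmatrix},
\]
so that $P_0 - N^{-T}P_0 N^{-1}$ has $(1,1)$ block $P_{xx} - A^{-T}P_{xx}A^{-1}$, off-diagonal blocks $\pm A^{-T}P_{xx}A^{-1}\ComplState$, and $(2,2)$ block $-\ComplState^TA^{-T}P_{xx}A^{-1}\ComplState$. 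I would then substitute the first-order expansions already recorded in the excerpt: $A^{-1} = I_{n_x} - \Delta t\,\tA + \Delta t^2\,\tA_r$ from~\eqref{defAinv}, and, from~\eqref{tslcs2dlcs} together with $\tA_\theta^{-1} = I_{n_x} - \theta\Delta t\,\tA$, the expansion $\ComplState = \Delta t\,\tComplState + \Delta t^2 E_C$ with $E_C = -\theta\,\tA\tComplState$ bounded.

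Collecting powers of $\Delta t$ is the decisive bookkeeping step. For the $(1,1)$ block, $A^{-T}P_{xx}A^{-1} = P_{xx} - \Delta t(\tA^T P_{xx} + P_{xx}\tA) + O(\Delta t^2)$, so the leading term of $P_{xx}-A^{-T}P_{xx}A^{-1}$ is $\Delta t(\tA^T P_{xx} + P_{xx}\tA)$, exactly the $(1,1)$ block of $Q(P_{xx})$ in~\eqref{lcscqlf:setK2}. For the off-diagonal blocks, $A^{-T}P_{xx}A^{-1} = P_{xx} + O(\Delta t)$ multiplied by $\ComplState = \Delta t\,\tComplState + O(\Delta t^2)$ gives leading term $\Delta t\,P_{xx}\tComplState$, matching the $(1,2)$ block of $Q$. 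The crucial observation is the $(2,2)$ block: it carries two factors of $\ComplState$, each $O(\Delta t)$, hence is $O(\Delta t^2)$ with no linear term, matching the zero $(2,2)$ block of $Q(P_{xx})$. Thus $P_0 - N^{-T}P_0 N^{-1} = \Delta t\,Q(P_{xx}) + \Delta t^2\,Q_r(P_{xx})$, and I define $Q_r(P_{xx}) = \Delta t^{-2}\bigl(P_0 - N^{-T}P_0 N^{-1} - \Delta t\,Q(P_{xx})\bigr)$.

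The main obstacle is not the algebra but verifying that $Q_r(P_{xx})$ is genuinely \emph{bounded} as $\Delta t \to 0$, uniformly over the admissible step sizes. This amounts to checking that every residual surviving into $Q_r$ is a product of matrices that stay bounded: $\tA$, $\tComplState$, $P_{xx}$, the bounded residuals $\tA_r$ and $E_C$, and---critically---$A^{-1}$ itself. The conditions~\eqref{bndOnDeltat} are precisely what guarantee the Neumann expansion~\eqref{defAinv} with a bounded $\tA_r$ and that $A^{-1}$ remains bounded, so once those are invoked the boundedness of $Q_r$ follows by bounding each remaining term in operator norm by a constant times $\Delta t^2$ or higher. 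The final multiplication by $N^T(\cdot)N$ to recover $M(P_{xx})$ is an exact identity requiring no further estimates.
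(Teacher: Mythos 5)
Your proof is correct and follows essentially the same route as the paper: the paper likewise writes $M(P_{xx}) = N^T\bigl(N^{-T}M(P_{xx})N^{-1}\bigr)N$, arrives at exactly your middle matrix with blocks $P_{xx}-A^{-T}P_{xx}A^{-1}$, $A^{-T}P_{xx}A^{-1}\ComplState$, and $-\ComplState^T A^{-T}P_{xx}A^{-1}\ComplState$, and then performs the same block-by-block $\Delta t$-expansion via~\eqref{tslcs2dlcs} and~\eqref{defAinv}, with the $(2,2)$ block of order $\Delta t^2$ precisely because $\ComplState = O(\Delta t)$, and residual boundedness from~\eqref{bndOnDeltat}. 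Your identity $M(P_{xx}) = N^T P_0 N - P_0$ with $P_0 = \begin{psmallmatrix} P_{xx} & 0 \\ 0 & 0\end{psmallmatrix}$ is a tidy (and correct) shortcut to that middle matrix, but it does not change the substance of the argument.
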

\begin{proof}
Consider the definition of the matrix-valued function $M(P_{xx})$ in~\eqref{defMmatrix}
\begin{equation}\begin{aligned}
M(P_{xx}) = &\; 
N^{T} N^{-T} 
\begin{pmatrix} A^TP_{xx}A - P_{xx} & A^TP_{xx} \ComplState \\  
\ComplState^T P_{xx} A & \ComplState^T P_{xx} \ComplState \end{pmatrix} N^{-1} N \\
= &\; N^T  \begin{pmatrix} P_{xx} - A^{-T} P_{xx}A^{-1} & A^{-T}P_{xx}A^{-1} \ComplState \\  
\ComplState^T A^{-T} P_{xx} A^{-1} & - \ComplState^T A^{-T} P_{xx} A^{-1} \ComplState \end{pmatrix}  N, 
\end{aligned} \label{coposM}
\end{equation}
where the final equality is obtained by substituting for $N^{-1}$, multiplying through, and simplifying. 
Substituting~\eqref{tslcs2dlcs} into the blocks of the matrix in~\eqref{coposM}, simplifying, and applying Lemma~\ref{lemma:boundmatrix}, we obtain for all $\tstep$ satisfying~\eqref{bndOnDeltat} that

\begin{subequations}
\begin{align}
P_{xx}- A^{-T}P_{xx} A^{-1}
	=&\; \tstep (\tA^T P_{xx} + P_{xx} \tA) + \tstep^2 \cdot Q_{r,11}(P_{xx}) \\
A^{-T}P_{xx}A^{-1}\ComplState
	=&\; \tstep P_{xx} \tComplState + \tstep^2 \cdot Q_{r,12}(P_{xx}) \\
-\ComplState^T A^{-T} P_{xx} A^{-1} \ComplState 
	=&\; \tstep^2 \cdot Q_{r,22}(P_{xx})  
\end{align} \label{simplifyForsmallDt}
\end{subequations}
where $Q_{r,11}(P_{xx})$, $Q_{r,12}(P_{xx})$, and $Q_{r,22}(P_{xx})$ are residual matrices with bounded norm.  Define the matrix $Q_r(P_{xx}) = \begin{psmallmatrix} Q_{r,11}(P_{xx}) & Q_{r,12}(P_{xx}) \\ Q_{r,12}(P_{xx})^T & Q_{r,22}(P_{xx}) \end{psmallmatrix}$.  
Plugging~\eqref{simplifyForsmallDt} into the expressions in~\eqref{coposM} yields~\eqref{relateM2tM}, which proves the claim.
\end{proof}

Combining the Lemmas~\eqref{lemma:relatecones} and~\eqref{lemma:relateM2tM} yields the main result that there exists a CQLF for the LCS if and only if there exists a CQLF for \dlcs.

\begin{theorem}\label{thm:cqlfequiv}
Suppose $\tComplCompl$ is a P-matrix, and $\tstep$ is such that~\eqref{bndOnDeltat} holds. Then the LCS is exponentially stable using a CQLF if and only if the \dlcs\, obtained by the time-stepping formulation~\eqref{tslcs1} is exponentially stable using a CQLF.
\end{theorem}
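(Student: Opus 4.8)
The plan is to show that a single matrix $P_{xx}$ can serve as a CQLF witness for both systems, the two copositivity requirements differing only by a perturbation of order $\Delta t$ that the two preceding lemmas make explicit. The starting point is an exact identity obtained by combining Lemma~\ref{lemma:relatecones} and Lemma~\ref{lemma:relateM2tM}. Parts (a)-(b) of Lemma~\ref{lemma:relatecones} show that the map $N$ is a bijection of $\setK$ onto $\setC$ carrying nonzero vectors to nonzero vectors, while Lemma~\ref{lemma:relateM2tM} factors $M(P_{xx}) = N^T(\Delta t\, Q(P_{xx}) + \Delta t^2 Q_r(P_{xx}))N$. Hence, for any $P_{xx}$ and any $(x,\lambda)\in\setK$, writing $(\tx,\lambda)=N(x,\lambda)\in\setC$, one has $(x,\lambda)^T M(P_{xx})(x,\lambda) = \Delta t\,(\tx,\lambda)^T(Q(P_{xx})+\Delta t\, Q_r(P_{xx}))(\tx,\lambda)$. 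Dividing by $\Delta t>0$ and using the bijection gives
\[
-M(P_{xx}) \succ_{\setK} 0 \iff -\bigl(Q(P_{xx}) + \Delta t\, Q_r(P_{xx})\bigr) \succ_{\setC} 0 .
\]
Thus \dlcs\ stability with a CQLF is \emph{exactly} strict $\setC$-copositivity of $-(Q(P_{xx})+\Delta t\,Q_r(P_{xx}))$, whereas LCS stability is strict $\setC$-copositivity of $-Q(P_{xx})$, with the same requirement $P_{xx}\succ 0$; the two conditions differ only through the $O(\Delta t)$ term $\Delta t\,Q_r(P_{xx})$.

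For the ``only if'' direction I would take an LCS witness $\tP_{xx}\succ 0$ with $-Q(\tP_{xx}) \succ_{\setC} 0$ and reuse it, setting $P_{xx}=\tP_{xx}$. Since $\setC\cap\B^{n_x+n_c}_1$ is compact, Lemma~\ref{lemma:normBall} yields a margin $\gamma>0$ with $(\tx,\lambda)^T Q(\tP_{xx})(\tx,\lambda)\le-\gamma$ on that set, while the boundedness of $Q_r$ gives a bound $R$ on $|(\tx,\lambda)^T Q_r(\tP_{xx})(\tx,\lambda)|$ there. For every $\Delta t<\gamma/R$ the perturbed form is at most $-\gamma+\Delta t R<0$, so $-(Q(\tP_{xx})+\Delta t\,Q_r(\tP_{xx})) \succ_{\setC} 0$ and, by the identity above, $-M(\tP_{xx}) \succ_{\setK} 0$; together with the unchanged $\tP_{xx}\succ 0$ this is a CQLF for the \dlcs. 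The threshold is clean here because the witness, hence $\gamma$, is fixed independently of $\Delta t$.

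For the ``if'' direction I would run the same computation in reverse. From a \dlcs\ witness $P_{xx}\succ 0$ with $-M(P_{xx})\succ_{\setK}0$, the identity gives $-(Q(P_{xx})+\Delta t\,Q_r(P_{xx}))\succ_{\setC}0$, hence a margin $\gamma>0$ on $\setC\cap\B^{n_x+n_c}_1$ by Lemma~\ref{lemma:normBall}; subtracting the $O(\Delta t)$ perturbation then yields $(\tx,\lambda)^T Q(P_{xx})(\tx,\lambda)\le -\gamma+\Delta t R<0$ once $\Delta t<\gamma/R$, i.e. $-Q(P_{xx})\succ_{\setC}0$, an LCS CQLF with the same $P_{xx}$. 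Note that the full norm-equivalence of Lemma~\ref{lemma:relatecones}(c) is not needed, since the whole argument can be carried out on the sphere $\setC\cap\B^{n_x+n_c}_1$.

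The main obstacle is making the smallness threshold uniform, and it bites in the ``if'' direction: there $\gamma$ comes from the \dlcs\ witness, which itself depends on $\Delta t$, so the per-$\Delta t$ bound $\Delta t<\gamma/R$ is circular for a clean ``for all sufficiently small $\Delta t$'' statement. To close this I would argue by contradiction: if the \dlcs\ were stable along a sequence $\Delta t_n\downarrow0$ while the LCS were not, I would normalize the witnesses using the homogeneity of the conditions (e.g. $\max_{ij}|\mysub{P_n}{ij}|=1$, which preserves $P_n\succ0$), pass to a convergent subsequence $P_n\to P^\star\ne0$, and use continuity of $Q(\cdot),Q_r(\cdot)$ in $P_{xx}$ together with $\Delta t_n\to0$ and the boundedness of $Q_r$ to force $-Q(P^\star)\succcurlyeq_{\setC}0$. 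The delicate point is that this limit is only a \emph{weak}, possibly merely positive semidefinite, certificate, so recovering a genuine \emph{strict} LCS CQLF requires ruling out the marginal case; I expect to do so by invoking the P-matrix hypothesis on $\tComplCompl$ (which makes $\ComplCompl$ an R$_0$- and Q-matrix and keeps $N$ nonsingular), so that a nontrivial null direction of the limiting certificate cannot persist.
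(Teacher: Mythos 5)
Your proof follows essentially the same route as the paper's. The paper likewise combines Lemma~\ref{lemma:relatecones} and Lemma~\ref{lemma:relateM2tM} into the identity you state (its equation~\eqref{relateM2tM1}), and for the ``only if'' direction it fixes the LCS witness, extracts a margin $\gamma>0$ from $-Q(\tP_{xx})\succ_{\setC}0$, and then chooses $\Delta t$ so that $\Delta t\cdot\|Q_r(\tP_{xx})\|\leq\gamma/2$ --- your argument verbatim, including the threshold $\Delta t<\gamma/R$. For the ``if'' direction the paper says only that it ``can be shown using similar arguments,'' which is exactly your first, direct reverse computation: take the \dlcs\ witness, get a margin for $-(Q+\Delta t\,Q_r)$ over $\setC\cap\B^{n_x+n_c}_1$, and absorb the $O(\Delta t)$ perturbation. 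At the paper's level of rigor --- where the smallness threshold on $\Delta t$ is allowed to depend on the witness at hand, as the paper's own phrase ``choose $\Delta t$ smaller than dictated by~\eqref{bndOnDeltat}'' makes explicit --- your proposal is complete, and your remark that Lemma~\ref{lemma:relatecones}(c) can be bypassed by working on the compact slice $\setC\cap\B^{n_x+n_c}_1$ is a small genuine simplification.

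Your concern about circularity in the ``if'' direction is a fair observation about the quantifier structure, but be aware that the paper does not resolve it either: its one-line ``similar arguments'' carries precisely the per-witness, $\Delta t$-dependent threshold you call circular. Your proposed repair by normalization and compactness goes beyond the paper's proof and, as you yourself flag, is not closed: the limit $P^\star$ of normalized witnesses $P_n\succ 0$ is in general only positive semidefinite, and the limiting certificate satisfies only $-Q(P^\star)\succcurlyeq_{\setC}0$; the P-matrix hypothesis on $\tComplCompl$ keeps $\ComplCompl$ a Q- and R$_0$-matrix and $N$ nonsingular, but it does not by itself rule out a marginal (nonstrict) limit, so upgrading to a strict CQLF would require a separate argument that you have not supplied. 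Since the theorem as the paper proves it only needs the per-witness version, this excursion is optional; either prove the strictness step or state the result with the witness-dependent threshold, as the paper implicitly does.
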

\begin{proof}
Let $P_{xx} \in \Sy^{n_x}$ be given.  Pre-multiply and post-multiply the equation in~\eqref{relateM2tM} by $(x,\lambda) \in \setK$ to obtain
\begin{equation}
	\begin{pmatrix} x \\ \lambda \end{pmatrix}^T M(P_{xx}) \begin{pmatrix} x \\ \lambda \end{pmatrix} = 
	\begin{pmatrix} \tx \\ \lambda \end{pmatrix}^T \left( \tstep \cdot Q (P_{xx}) + \tstep^2 \cdot 
	Q_r (P_{xx}) \right) 
	\begin{pmatrix} \tx \\ \lambda \end{pmatrix},
	\label{relateM2tM1}
\end{equation}
where $\tx = Ax + \ComplState \lambda$.  Further, by Lemma~\ref{lemma:relatecones}, we have that $(\tx,\lambda) \in \setC$.

Consider the only if part of the claim. 
Suppose the LCS is exponentially stable using a CQLF. Since $-Q(P_{xx}) \succ_{\cal C} 0$ we have that
$-( \tx, \lambda )^T Q (P_{xx}) ( \tx,  \lambda ) 
\geq \gamma \|( \tx , \lambda )\|^2$ for some $\gamma > 0$.  
Choose $\tstep$ smaller than dictated by~\eqref{bndOnDeltat} if necessary so that $\tstep \cdot | ( \tx , \lambda )^T Q_r (P_{xx}) ( \tx , \lambda ) | \leq \gamma/2$.  Then we have for such $\tstep$ from~\eqref{relateM2tM1} that 
$-( x, \lambda )^T M (P_{xx}) ( x, \lambda ) \geq (\gamma/2) \tstep \|( \tx , \lambda )\|^2$.  Combining this with Lemma~\ref{lemma:relatecones}(c) proves the claim.   
The if part of the claim can be shown using similar arguments.  This completes the proof.
\end{proof}

\subsection{EQLF Equivalence}\label{sec:eqlfequivalence}

We  now consider the the EQLF and derive similar results between the LCS and \dlcs.  The condition for exponential stability derived in~\cite{CamlibelPangShen2006} is stated next.

\begin{proposition}\cite[Theorem~3.1]{CamlibelPangShen2006}
The LCS~\eqref{lcs} is exponentially stable if there exists $\tP_{xx} \in \Sy^{n_x}$, $\tP_{x\lambda} \in \R^{n_x \times n_c}$, $\tP_{\lambda\lambda} \in \Sy^{n_c}$ such that 
\begin{subequations}\label{lcseqlf}
\begin{align}
& \underbrace{\begin{pmatrix} \tP_{xx} & \tP_{x\lambda} \\
\tP_{x \lambda}^T & \tP_{\lambda \lambda} \end{pmatrix}}_{=: \tP} \succ_{\setK} 0 \label{lcsstab:setK1e} \\
& - \underbrace{\begin{pmatrix} 
\tA^T\tP_{xx} + \tP_{xx} \tA & \tP_{xx} \tComplState + \tA^T \tP_{x\lambda} & \tP_{x\lambda} \\	
\tComplState^T \tP_{xx} + \tP_{x\lambda}^T \tA & \tP_{x\lambda}^T \tComplState + \tComplState^T \tP_{x\lambda} & 
\tP_{\lambda\lambda} \\
\tP_{x\lambda}^T & \tP_{\lambda\lambda} & 0
\end{pmatrix}}_{=: \hatQ(\tilde{P})} \succ_{\setC} 0
\end{align}
where the set $\widehat{\setC} \subseteq \R^{n_x+2n_c}$ is defined as
\begin{align}
\widehat{\setC} = \left\{ (x,\lambda,\dirlambda) \in \R^{n_x+2n_c} \, \left| \, \begin{aligned} 
& (x,\lambda) \in \gr\,\sol{\tStateCompl x}{\tComplCompl} \\
& \mysub{\tStateCompl (\tA x + \tComplState \lambda) + \tComplCompl \dirlambda}{\alpha(\lambda)} = 0 \\
& 0 \leq \mysub{\dirlambda}{\beta(\lambda)} \perp \mysub{\tStateCompl (\tA x + \tComplState \lambda) + \tComplCompl \dirlambda}{\beta(\lambda)} \geq 0 \\
& \mysub{\dirlambda}{\gamma(\lambda)} = 0 
\end{aligned} \right. \right\} \label{defKtilde}
\end{align}
\end{subequations}
with $\alpha(\lambda), \beta(\lambda), \gamma(\lambda)$ defined according to~\eqref{lcpind} for $\sol{\tStateCompl \tx}{\tComplCompl}$.
\end{proposition}
Note that for $(x,\lambda,\lambda') \in \widehat{\setC}$, $\dirlambda$ is the directional derivative of $\lambda(x)$ along the direction 
$(\tA x + \tComplState \lambda)$ which is the time-derivative of the state~\eqref{lcs:dyn} \cite{CamlibelPangShen2006}.

Suppose $(x,\lambda,\hatlambda) \in \widehat{\setK}$ with $\hatlambda \in \sol{\StateCompl (Ax + \ComplState \lambda)}{\ComplCompl}$, so that $\hatlambda$ is the one-step evolution of the \dlcs~\eqref{dlcs:compl} from the state $(Ax + \ComplState \lambda)$. 
By Lemma~\ref{lemma:relatecones}, we also have that $\lambda = \sol{\tStateCompl \tx}{\tComplCompl}$ where $\tx = Ax + \ComplState \lambda$, i.e. 
$(\tx,\lambda) \in \gr\,\sol{\tStateCompl \tx}{\tComplCompl}$.  Let $\xtraj(t),\lamtraj(t)$ for $t \geq 0$ denote the continuous-time trajectory of the LCS~\eqref{lcs} from the initial condition of $(\xtraj(0),\lamtraj(0)) = (\tx,\lambda)$. Further, let $\dirlambda$ be such that $(\tx,\lambda,\dirlambda) \in \widehat{\setC}$. Then, we intuitively expect that $\hatlambda \approx \lamtraj(h)$ for all sufficiently small $h > 0$. 
We in fact show in Lemma~\ref{lemma:relatehatlambda} that $\hatlambda \approx \lamtraj(0) + h \dirlambda$. Prior to that, we collect preliminary results in Lemma~\ref{lemma:lambdaexpansion} which shows that $\lamtraj(h) \approx \lambda + \tstep \cdot \dirlambda$ and~\ref{lemma:errorbound} which shows that $\lamtraj(h) \approx \hatlambda$.  This results rely on results from 
Shen and Pang~\cite{ShenPang2006} and Chen and Wang~\cite{ChenWang2012} respectively. 

\begin{lemma}(\cite[Lemma~14]{ShenPang2006})\label{lemma:lambdaexpansion}
Suppose $\tComplCompl$ is a P-matrix.  Let $(\xtraj(t),\lamtraj(t))$ for $t \geq 0$ denote the continuous-time trajectory of the LCS~\eqref{lcs} from the initial condition $(\xtraj(0),\lamtraj(0)) = (\tx,\lambda) \in \gr\,\sol{\tStateCompl \tx}{\tComplCompl}$. Then
\begin{equation}
    \|\lamtraj(h) - \lambda - \tstep \cdot \dirlambda\| =  o(\tstep) \cdot O(\|\tx\|)
    \label{lambdaexpansion}
\end{equation}
where $\dirlambda$ is such that $(\tx,\lambda,\dirlambda) \in \widehat{\setC}$.
\end{lemma}
\begin{proof}
The claim follows from Lemma~14 in~\cite{ShenPang2006}.
\end{proof}

\begin{lemma}(\cite[Theorem~3.1]{ChenWang2012})\label{lemma:errorbound}
Suppose $\tComplCompl$ is a P-matrix.  Let $(\xtraj(t),\lamtraj(t))$ for $t \geq 0$ denote the trajectory of the LCS~\eqref{lcs} from the initial condition $(\xtraj(0),\lamtraj(0)) = (\tx,\lambda) \in \gr\,\sol{\tStateCompl \tx}{\tComplCompl}$. Let $\hatlambda$ denote the one time-step evolution of~\eqref{dlcs:compl} from the same initial state $\tx$ i.e. $\hatlambda \in \gr\,\sol{\StateCompl \tx}{\ComplCompl}$.  Then there exists a $\overline{\tstep} > 0$ such that for all $\tstep < \overline{\tstep}$
\begin{equation}
    \|\hatlambda - \lamtraj(\tstep)\| = \tstep^2 \cdot O(\|\tx\|). 
    \label{errorbound}
\end{equation}
\end{lemma}
\begin{proof}
Under the assumptions of the lemma it is readily verified that Theorem~3.1 in~\cite{ChenWang2012} holds.  Hence, $\|\hatlambda - \lambda(\tstep)\|_{\infty} \leq \tstep^2 L \|\tA \tx + \tComplState \lambda\|_\infty$ for some constant $L > 0$ independent of $\tstep$, $x, \lambda$. Using Lemma~\ref{lemma:Pmatprops}, we know that $\|\lambda\|$ can be bounded using $\|\tx\|$. Combining this with the above and using the equivalence of norms yields~\eqref{errorbound}.  
\end{proof}

\noindent Combining Lemma~\ref{lemma:lambdaexpansion} and~\ref{lemma:errorbound} gives us the desired relation between $\hatlambda$ and the pair $\lambda$, $\lambda'$.

\begin{lemma}\label{lemma:relatehatlambda}
Suppose the assumptions of Lemmas~\ref{lemma:lambdaexpansion} and~\ref{lemma:errorbound} hold. Then
\begin{equation}
    \hatlambda = \lambda + \tstep \cdot \dirlambda + o(\tstep) \cdot f(\tx) \label{relatehatlambda}
\end{equation}
where $\|f(\tx)\| = O(\|\tx\|)$.
\end{lemma}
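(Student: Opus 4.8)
The plan is to obtain~\eqref{relatehatlambda} by adding the estimates of Lemmas~\ref{lemma:lambdaexpansion} and~\ref{lemma:errorbound}, since the sought expansion is precisely the expansion~\eqref{lambdaexpansion} of the exact LCS trajectory $\lambda(\Delta t)$ with $\lambda(\Delta t)$ replaced by the one-step time-stepping iterate $\hatlambda$, and Lemma~\ref{lemma:errorbound} controls exactly the error of this replacement. First I would write the exact decomposition
\[
\hatlambda - \lambda - \Delta t \cdot \dirlambda = \left( \lambda(\Delta t) - \lambda - \Delta t \cdot \dirlambda \right) + \left( \hatlambda - \lambda(\Delta t) \right),
\]
which splits the target residual into the Lemma~\ref{lemma:lambdaexpansion} remainder and the Lemma~\ref{lemma:errorbound} remainder.

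Next I would bound the two pieces. By Lemma~\ref{lemma:lambdaexpansion}, the first bracket equals $h_1(\tx) \cdot o(\Delta t)$ with $\|h_1(\tx)\| = O(\|\tx\|)$; by Lemma~\ref{lemma:errorbound}, the norm of the second bracket is at most $\Delta t^2 \cdot O(\|\tx\|)$. The single observation that glues the two estimates together is that $\Delta t^2 = o(\Delta t)$ as $\Delta t \to 0$, so the error-bound remainder has the same order $o(\Delta t) \cdot O(\|\tx\|)$ as the expansion remainder, and neither disturbs the first-order coefficient $\dirlambda$.

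Finally, by the triangle inequality both remainders can be collected into a single term: writing the scalar prefactors as a common $o(\Delta t)$ factor and gathering the $\tx$-dependence into a vector $h_2(\tx)$ with $\|h_2(\tx)\| \leq O(\|\tx\|)$ yields~\eqref{relatehatlambda}. I do not expect any genuine difficulty in this argument; the only step requiring care is the asymptotic bookkeeping, namely confirming that the quadratic-in-$\Delta t$ error term from Lemma~\ref{lemma:errorbound} is asymptotically dominated by the linear term $\Delta t \cdot \dirlambda$ and can therefore be merged into the $o(\Delta t)$ remainder without altering the first-order expansion.
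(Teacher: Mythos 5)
Your proposal is correct and follows essentially the same route as the paper: the paper's proof likewise writes $\hatlambda = \lambda(\Delta t) + \Delta t^2 \cdot h_2(\tx)$ via Lemma~\ref{lemma:errorbound} and then substitutes the expansion~\eqref{lambdaexpansion} from Lemma~\ref{lemma:lambdaexpansion}, absorbing the $\Delta t^2 \cdot O(\|\tx\|)$ term into the $o(\Delta t)$ remainder. Your additive decomposition of the residual is just a rearrangement of that same substitution, and your asymptotic bookkeeping matches the paper's.
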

\begin{proof}
The claim follows from combining Lemma~\ref{lemma:lambdaexpansion} and~\ref{lemma:errorbound}. 
\end{proof}

We next relate the set $\widehat{\setK}$ in~\eqref{defKhat} with $\widehat{\setC}$.
\begin{lemma}\label{lemma:relateKhat2Kbar}
Suppose $\tComplCompl$ is a P-matrix, and let $\tstep$ be such that~\eqref{bndOnDeltat} holds.  Let the matrix $\widehat{N}$ and vector $\widehat{f}(\tx)$ be defined as
\begin{subequations}
\begin{align}
\widehat{N} =         \begin{pmatrix}
            A & C & 0 \\
            0 & I_{n_c} & 0 \\
            0 & -\frac{1}{\tstep}I_{n_c} & \frac{1}{\tstep}I_{n_c} 
        \end{pmatrix} \text{ and } 
        \widehat{f}(\tx) = \begin{pmatrix} 
        0 \\
        0 \\
        f(\tx)
        \end{pmatrix}. \label{defhatNh}
\end{align}
Then there exists a $\overline{\tstep} > 0$ such that for all $\tstep \in (0,\overline{\tstep})$ the following hold:
\begin{enumerate}[(a)]
    \item If $(\tx,\lambda,\dirlambda) \in \widehat{\setC}$ then 
        $\widehat{N}^{-1}  \begin{psmallmatrix} \tx \\ \lambda \\ \dirlambda \end{psmallmatrix} + 
        o(\tstep) \widehat{f}(\tx) 
        \in \widehat{\setK}$.
    \item If $(x,\lambda,\hatlambda) \in \widehat{\setK}$ then 
        $\widehat{N} \begin{psmallmatrix} x \\ \lambda \\ \hatlambda \end{psmallmatrix} - 
        \frac{o(\tstep)}{\tstep} \widehat{f}(Ax + \ComplState\lambda) 
        \in \widehat{\setC}$.
 	\item There exist constants $c_3, c_4 > 0$ such that 
		$c_3 \|( \tx , \lambda , \dirlambda )\| \leq 
		\|( x , \lambda , \hatlambda )\| \leq 
		c_4  \|( \tx , \lambda , \dirlambda )\|$. 
\end{enumerate}
\end{subequations}
\end{lemma}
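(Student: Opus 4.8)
The plan is to reduce all three parts to a single algebraic observation: $\widehat{N}$ transports the triple $(x,\lambda,\hatlambda)$ to $(\tx,\lambda,(\hatlambda-\lambda)/\Delta t)$, and by the expansion in Lemma~\ref{lemma:relatehatlambda} the last coordinate is $\dirlambda$ up to a term of size $\tfrac{o(\Delta t)}{\Delta t}$ times $h_2(\tx)$. First I would record the explicit inverse, which exists for all $\Delta t$ satisfying~\eqref{bndOnDeltat} since $A$ is invertible there,
\[
\widehat{N}^{-1} = \begin{pmatrix} A^{-1} & -A^{-1}\ComplState & 0 \\ 0 & I_{n_c} & 0 \\ 0 & I_{n_c} & \Delta t\, I_{n_c} \end{pmatrix}.
\]
A direct computation gives $\widehat{N}(x,\lambda,\hatlambda)^T = (\tx,\lambda,\tfrac{\hatlambda-\lambda}{\Delta t})^T$ with $\tx := Ax+\ComplState\lambda$; substituting $\hatlambda-\lambda = \Delta t\,\dirlambda + o(\Delta t)\,h_2(\tx)$ from~\eqref{relatehatlambda} turns this into the backbone identity $\widehat{N}(x,\lambda,\hatlambda)^T = (\tx,\lambda,\dirlambda)^T + \tfrac{o(\Delta t)}{\Delta t}\widehat{h}(\tx)$, equivalently $(x,\lambda,\hatlambda)^T = \widehat{N}^{-1}(\tx,\lambda,\dirlambda)^T + o(\Delta t)\widehat{h}(\tx)$.

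For part (b) I would start from $(x,\lambda,\hatlambda)\in\widehat{\setK}$, so $\lambda=\sol{\StateCompl x}{\ComplCompl}$ and $\hatlambda$ is the unique (by the P-matrix hypothesis on $\ComplCompl$) one-step solution $\sol{\StateCompl\tx}{\ComplCompl}$. Lemma~\ref{lemma:relatecones}(a) gives $(\tx,\lambda)\in\setC$, and the remark following~\eqref{defKtilde} identifies $\dirlambda$ as the directional derivative of $\lambda(\cdot)$ at $\tx$ along $\tA\tx+\tComplState\lambda$, which is precisely what the LCP-type conditions in~\eqref{defKtilde} encode; hence $(\tx,\lambda,\dirlambda)\in\widehat{\setC}$. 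Rearranging the backbone identity to $\widehat{N}(x,\lambda,\hatlambda)^T - \tfrac{o(\Delta t)}{\Delta t}\widehat{h}(\tx) = (\tx,\lambda,\dirlambda)^T$ and noting $\widehat{h}(\tx)=\widehat{h}(Ax+\ComplState\lambda)$ gives (b). Part (a) is the mirror image: from $(\tx,\lambda,\dirlambda)\in\widehat{\setC}$, Lemma~\ref{lemma:relatecones}(b) yields $(x,\lambda)\in\setK$ with $x=A^{-1}\tx-A^{-1}\ComplState\lambda$; applying $\widehat{N}^{-1}$ and adding $o(\Delta t)\widehat{h}(\tx)$ produces a triple whose last coordinate is $\lambda+\Delta t\,\dirlambda+o(\Delta t)h_2(\tx)=\hatlambda$ by~\eqref{relatehatlambda}, and since $Ax+\ComplState\lambda=\tx$ this $\hatlambda$ solves $\sol{\StateCompl(Ax+\ComplState\lambda)}{\ComplCompl}$, so the triple lies in $\widehat{\setK}$.

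For part (c) I would pass to operator norms. Since $\|\widehat{N}^{-1}\|$ stays bounded as $\Delta t\to0$ (because $A\to I_{n_x}$ and the $\Delta t\,I_{n_c}$ block vanishes) while $\|\widehat{N}\|\le C/\Delta t$, and since $\|\widehat{h}(\tx)\|=\|h_2(\tx)\|=O(\|\tx\|)=O(\|(\tx,\lambda,\dirlambda)\|)$, taking norms in the two identities yields $\|(x,\lambda,\hatlambda)\|\le(\|\widehat{N}^{-1}\|+o(\Delta t)\,c)\|(\tx,\lambda,\dirlambda)\|$ for the upper bound, and $\|(\tx,\lambda,\dirlambda)\|\le \|\widehat{N}\|\,\|(x,\lambda,\hatlambda)\| + \tfrac{o(\Delta t)}{\Delta t}\,c'\,\|(\tx,\lambda,\dirlambda)\|$ for the lower bound. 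Choosing $\Delta t$ small enough that $\tfrac{o(\Delta t)}{\Delta t}\,c'\le\tfrac12$ absorbs the last term and, after using $\|\widehat{N}\|\le C/\Delta t$, delivers $c_3\Delta t\,\|(\tx,\lambda,\dirlambda)\|\le\|(x,\lambda,\hatlambda)\|$, which together with the upper bound proves (c).

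I expect the main obstacle to be the bookkeeping in part (a): one must confirm that the $\dirlambda$ prescribed abstractly by the cone $\widehat{\setC}$ genuinely coincides with the directional derivative appearing in the expansion of Lemma~\ref{lemma:relatehatlambda} (invoking directional differentiability and uniqueness for P-matrix LCPs, Lemma~\ref{lemma:Pmatprops}(b)), and that the degenerate case $\tx=0$—where the standing hypothesis $\tStateCompl\tx\ne0$ for $\tx\ne0$ is vacuous and all of $\lambda,\dirlambda,\hatlambda,h_2(\tx)$ vanish—is dispatched separately, so that the expansion lemmas are only applied when $\tx\ne0$.
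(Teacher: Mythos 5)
Your proof is correct and follows essentially the same route as the paper's: the same backbone identities $(x,\lambda,\hatlambda)^T = \widehat{N}^{-1}(\tx,\lambda,\dirlambda)^T + o(\Delta t)\,\widehat{h}(\tx)$ and its $\widehat{N}$-multiplied counterpart (obtained by combining Lemma~\ref{lemma:relatecones} with the expansion of Lemma~\ref{lemma:relatehatlambda}), followed by the same operator-norm bounds $\|\widehat{N}^{-1}\| \leq \max(\|A^{-1}\|,1)$ and $\|\widehat{N}\| = O(1/\Delta t)$ with the correction term absorbed by shrinking $\Delta t$. If anything, you supply more detail than the paper does on the cone-membership steps (a) and (b) and on the identification of $\dirlambda$ with the directional derivative, which the paper leaves implicit.
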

\begin{proof}
\begin{subequations}
From Lemma~\ref{lemma:relatecones} we have that $(A^{-1}\tx - A^{-1}\ComplState\lambda,\lambda) \in \setK$.  Combining this with Lemma~\ref{lemma:relatehatlambda} and noting that
 $\widehat{N}^{-1} = \begin{psmallmatrix} A^{-1} & -A^{-1}C & 0 \\
 0 & I_{n_c} & 0 \\
 0 & I_{n_c} & \tstep \cdot I_{n_c} \end{psmallmatrix}$ yields
 \begin{align}
 \begin{pmatrix} x \\ \lambda \\ \hatlambda \end{pmatrix} =&\; 
\widehat{N}^{-1} \begin{pmatrix} \tx \\ \lambda \\ \dirlambda \end{pmatrix} + 
o(\tstep) \widehat{f}(\tx) \label{relatedirlamhatlam.1} \\
 \begin{pmatrix} \tx \\ \lambda \\ \dirlambda \end{pmatrix} =&\; 
\widehat{N} \begin{pmatrix} x \\ \lambda \\ \hatlambda \end{pmatrix} - 
\frac{o(\tstep)}{\tstep} \widehat{f}(A x + \ComplState \lambda). \label{relatedirlamhatlam.2} 
 \end{align}
where the second equality is obtained by left multiplying~\eqref{relatedirlamhatlam.1} with $\widehat{N}$.  
 The claims in (a) and (b) follow from~\eqref{relatedirlamhatlam.1} and~\eqref{relatedirlamhatlam.2} respectively. 
To prove (c), note that the first and second rows and columns of $\widehat{N}$ can be swapped to produce a lower triangular matrix $\begin{psmallmatrix}
            I_{n_c} & 0 & 0 \\
            C & A & 0 \\
            -\frac{1}{\tstep}I_{n_c} & 0 & \frac{1}{\tstep}I_{n_c} 
        \end{psmallmatrix}$.  
        Based on the lower triangular representation, the eigenvalues of $\widehat{N}$ can be obtained as the eigenvalues of $A$, $I_{n_c}$, and $(1/\tstep) I_{n_c}$.  Hence, $\|\widehat{N}^{-1}\| \leq \max(\|A^{-1}\|,1)$. 
        The fact that $\|\widehat{N}^{-1}\|$ is bounded allows us to establish the existence of constant $c_4$ satisfying $\|(x,\lambda,\hatlambda)\| \leq  c_4 \|(\tx,\lambda,\dirlambda)\|$ in (c).  To show the existence of constant $c_3$ in (c), note that from Lemma~\ref{lemma:relatecones}(c) $c_1 \|(\tx,\lambda)\| \leq \|(x,\lambda)\|$. Further, the directional derivative $\dirlambda$ defined in~\eqref{defKtilde} satisfies a mixed LCP and can be bounded in terms of $(\tA \tx + \tComplState \lambda)$ using Lemma~\ref{lemma:Pmatprops}.  This completes the proof.
 \end{subequations}
\end{proof}

\noindent The matrices $\hatQ$ and $\widehat{M}$ are related next.
\begin{lemma}\label{lemma:relateMhat2Mbar}
Suppose $\tstep$ is chosen to be sufficiently small such that~\eqref{bndOnDeltat} holds.  Let $P_{xx} \in \Sy^{n_x}$, $P_{x\lambda} \in \R^{n_x \times n_c}$, and $P_{\lambda\lambda} \in \Sy^{n_c}$ be given.  Then 
\begin{equation}
\widehat{M}(P) = \widehat{N}^T \left( \tstep \cdot \hatQ(P) + \tstep^2 \cdot \hatQ_r(P) \right) \widehat{N} \label{relateMhat2Mbar}
\end{equation}
where $\hatQ_r(P)$ is a matrix of bounded norm, and $\widehat{N}$ is as defined in~\eqref{defhatNh}.
\end{lemma}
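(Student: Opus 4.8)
The plan is to mirror the proof of Lemma~\ref{lemma:relateM2tM} exactly: I would factor $\widehat{N}^T$ on the left and $\widehat{N}$ on the right of $\widehat{M}(P)$ and show that the conjugated middle matrix $G := \widehat{N}^{-T}\widehat{M}(P)\widehat{N}^{-1}$ equals $\Delta t\cdot\hatQ(P) + \Delta t^2\cdot\hatQ_r(P)$ with $\hatQ_r(P)$ bounded; re-multiplying by $\widehat{N}^T(\cdot)\widehat{N}$ then delivers~\eqref{relateMhat2Mbar}. The computation of $G$ becomes transparent once one records the factorization $\widehat{M}(P) = R^T P R - S^T P S$, where $R = \begin{psmallmatrix} A & \ComplState & 0 \\ 0 & 0 & I_{n_c}\end{psmallmatrix}$ and $S = \begin{psmallmatrix} I_{n_x} & 0 & 0 \\ 0 & I_{n_c} & 0\end{psmallmatrix}$. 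This is simply the statement, already verified in the proof of Theorem~\ref{thm:EQLF}, that $\widehat{M}(P)$ is the one-step difference $\widehat{V}(x_{k+1},\lambda_{k+2})-\widehat{V}(x_k,\lambda_{k+1})$. Consequently
\[
G = (R\widehat{N}^{-1})^T P (R\widehat{N}^{-1}) - (S\widehat{N}^{-1})^T P (S\widehat{N}^{-1}),
\]
and using the explicit inverse $\widehat{N}^{-1} = \begin{psmallmatrix} A^{-1} & -A^{-1}\ComplState & 0 \\ 0 & I_{n_c} & 0 \\ 0 & I_{n_c} & \Delta t\, I_{n_c}\end{psmallmatrix}$ recorded in the proof of Lemma~\ref{lemma:relateKhat2Kbar}, a short multiplication gives $R\widehat{N}^{-1} = \begin{psmallmatrix} I_{n_x} & 0 & 0 \\ 0 & I_{n_c} & \Delta t\, I_{n_c}\end{psmallmatrix}$ and $S\widehat{N}^{-1} = \begin{psmallmatrix} A^{-1} & -A^{-1}\ComplState & 0 \\ 0 & I_{n_c} & 0\end{psmallmatrix}$.

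Next I would assemble $G$ block by block. The blocks indexed only by the $(x,\lambda)$ coordinates are of the same type as those appearing in~\eqref{coposM}, namely $P_{xx}-A^{-T}P_{xx}A^{-1}$, $A^{-T}P_{xx}A^{-1}\ComplState$ and $-\ComplState^T A^{-T}P_{xx}A^{-1}\ComplState$, but carry additional terms involving $P_{x\lambda}$ (e.g.\ the $(1,2)$ block becomes $P_{x\lambda}-A^{-T}P_{x\lambda}+A^{-T}P_{xx}A^{-1}\ComplState$). These expand through the substitutions~\eqref{tslcs2dlcs} and the Neumann expansion~\eqref{defAinv} of $A^{-1}$ into $\Delta t$ times the corresponding upper-left blocks of $\hatQ(P)$ plus $\Delta t^2$ times bounded residuals; for the pure-$P_{xx}$ parts the expansions~\eqref{simplifyForsmallDt} may be quoted verbatim, while the new $P_{x\lambda}$ terms are handled by $I-A^{-T}=\Delta t\,\tA^T+O(\Delta t^2)$. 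The blocks coupling to the $\dirlambda$ direction arise \emph{entirely} from the $\Delta t\, I_{n_c}$ entry in the last block-row of $\widehat{N}^{-1}$: one reads off $(1,3)=\Delta t P_{x\lambda}$, $(2,3)=\Delta t P_{\lambda\lambda}$, $(3,3)=\Delta t^2 P_{\lambda\lambda}$, together with their symmetric counterparts. The first two match $\Delta t$ times the last block-row and block-column of $\hatQ(P)$ exactly; the $(3,3)$ entry is the special case, since $\hatQ(P)$ is zero there, so the whole term $\Delta t^2 P_{\lambda\lambda}$ is absorbed into $\hatQ_r(P)$.

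Collecting terms then yields $G=\Delta t\cdot\hatQ(P)+\Delta t^2\cdot\hatQ_r(P)$. Boundedness of $\hatQ_r(P)$ is immediate: every residual entry is a finite product of the fixed matrices $\tA$, $\tComplState$, $\tA_r$, $\tA_\theta^{-1}$ and the blocks of $P$, multiplied by a nonnegative power of $\Delta t$ that stays bounded under~\eqref{bndOnDeltat}. Substituting back into $\widehat{M}(P)=\widehat{N}^T G\,\widehat{N}$ completes the proof.

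The main obstacle is the $\tfrac{1}{\Delta t}$ scaling built into $\widehat{N}$, which could a priori generate negative powers of $\Delta t$ under the conjugation and destroy the claimed $\Delta t\,\hatQ+\Delta t^2\,\hatQ_r$ form. The point to check carefully is that the $\tfrac{1}{\Delta t}I_{n_c}$ block of $\widehat{N}$ acts only in the $\hatlambda-\lambda$ direction and is exactly cancelled by the $\Delta t\, I_{n_c}$ block of $\widehat{N}^{-1}$, so that both $R\widehat{N}^{-1}$ and $S\widehat{N}^{-1}$ remain affine in $\Delta t$ with no negative powers. This cancellation is precisely what guarantees that $G$ begins at order $\Delta t$ and that the $O(\Delta t^2)$ remainder has a coefficient $\hatQ_r(P)$ that is genuinely bounded rather than one that blows up as $\Delta t\to 0$.
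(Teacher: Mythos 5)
Your proposal is correct and follows the same basic strategy as the paper's proof: insert $\widehat{N}^T\widehat{N}^{-T}(\cdot)\,\widehat{N}^{-1}\widehat{N}$ around $\widehat{M}(P)$, compute the conjugated middle matrix explicitly, and expand each block to first order in $\Delta t$ using~\eqref{tslcs2dlcs} and~\eqref{defAinv}, absorbing all $O(\Delta t^2)$ terms into $\hatQ_r(P)$. What you do differently is organizational, and it pays off. The paper multiplies out $\widehat{N}^{-T}\widehat{M}(P)\widehat{N}^{-1}$ directly in~\eqref{coposMhat}; you first record the factorization $\widehat{M}(P)=R^TPR-S^TPS$ (the matrix form of the telescoping identity $\widehat{\psi}=\widehat{V}(x_{k+1},\lambda_{k+2})-\widehat{V}(x_k,\lambda_{k+1})$ from Theorem~\ref{thm:EQLF}) and conjugate $R$ and $S$ instead, reducing the block computation to the two easy products $R\widehat{N}^{-1}$ and $S\widehat{N}^{-1}$. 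One concrete dividend: your $(3,3)$ block $\Delta t^2 P_{\lambda\lambda}$ of the middle matrix is the \emph{correct} value --- since the third block column of $\widehat{N}^{-1}$ is $(0,0,\Delta t\, I_{n_c})$, that entry equals $\Delta t^2 P_{\lambda\lambda}$ rather than the $0$ displayed in~\eqref{coposMhat}; as you observe, this is harmless for the lemma because the term is bounded and of order $\Delta t^2$, hence absorbed into $\hatQ_r(P)$, but your bookkeeping is the accurate one. Likewise, your first-order expansions of the $(1,2)$ and $(2,2)$ blocks, $\Delta t\,(P_{xx}\tComplState+\tA^TP_{x\lambda})$ and $\Delta t\,(P_{x\lambda}^T\tComplState+\tComplState^TP_{x\lambda})$, are exactly what is needed to match $\hatQ(P)$; the paper's~\eqref{simplifyForsmallDt1} has small slips at precisely these spots (a dropped $\tA^T P_{x\lambda}$ term and a $\StateCompl$ where $\tComplState$ is meant --- the latter not even dimensionally conformable), which your route avoids. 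Finally, your explicit check that $R\widehat{N}^{-1}$ and $S\widehat{N}^{-1}$ are affine in $\Delta t$, so the $\tfrac{1}{\Delta t}$ block of $\widehat{N}$ never injects negative powers of $\Delta t$ into the middle matrix, isolates the one point where the claimed form $\Delta t\,\hatQ + \Delta t^2\,\hatQ_r$ with bounded $\hatQ_r$ could fail; the paper leaves this implicit.
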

\begin{proof}
Consider the definition of the matrix $\widehat{M}(P)$ in~\eqref{defMhatmatrix}
\begin{equation}\begin{aligned}
\widehat{M}(P)
= &\; \widehat{N}^{T} \widehat{N}^{-T} \begin{pmatrix}
		A^TP_{xx} A - P_{xx} & A^TP_{xx}\ComplState - P_{x\lambda} &  A^TP_{x\lambda} \\
		\ComplState^TP_{xx}A - P_{x\lambda}^T & \ComplState^TP_{xx}\ComplState - P_{\lambda\lambda} & \ComplState^TP_{x\lambda} \\
		 P_{x\lambda}^TA& P_{x\lambda}^T\ComplState & P_{\lambda\lambda} 
		 \end{pmatrix} \widehat{N}^{-1} \widehat{N} \\
= &\; \widehat{N}^T  \begin{pmatrix} 
		P_{xx} - A^{-T} P_{xx}A^{-1} & U_{x\lambda} 
		& \tstep P_{x\lambda} \\  
		U_{x\lambda}^T & 
		U_{\lambda\lambda} &
		\tstep P_{\lambda\lambda} \\
		\tstep P_{x\lambda}^T & \tstep P_{\lambda\lambda} & 0 
		\end{pmatrix}  \widehat{N}, \\
\text{where} &\; U_{x\lambda} = A^{-T}P_{xx}A^{-1} \ComplState - A^{-T}P_{x\lambda} + P_{x\lambda} \text{ and }\\
		&\; U_{\lambda\lambda} = - \ComplState^T A^{-T} P_{xx} A^{-1} \ComplState + P_{x\lambda}^TA^{-1} 
		\ComplState + \ComplState^TA^{-T}P_{x\lambda} .
\end{aligned} \label{coposMhat}
\end{equation}
The second equality is obtained from the first by multiplying through and simplifying. 
Substituting~\eqref{tslcs2dlcs} into the blocks of the matrix in~\eqref{coposMhat}, simplifying, and applying the result of Lemma~\ref{lemma:boundmatrix}, we obtain for all $\tstep$ satisfying~\eqref{bndOnDeltat} that 
\begin{subequations}
\begin{align}
P_{xx}- A^{-T}P_{xx} A^{-1}
	=&\; \tstep (\tA^T P_{xx} + P_{xx} \tA) + \tstep^2 \cdot \hatQ_{r,11}, \\
U_{x\lambda}
	=&\; \tstep P_{xx} \tComplState + \tstep^2 \cdot \hatQ_{r,12}, \text{ and} \\
U_{\lambda\lambda}
	=&\; \tstep \left(P_{x\lambda}^T \StateCompl + \StateCompl^T P_{x\lambda} \right) +  
	\tstep^2 \cdot \hatQ_{r,22} , 
\end{align} \label{simplifyForsmallDt1}
\end{subequations}
where $\hatQ_{r,11}$, $\hatQ_{r,12}$, and $\hatQ_{r,22}$ are bounded residual matrices.  Define the matrix $\hatQ_r = \begin{psmallmatrix} \hatQ_{r,11} & \hatQ_{r,12} & 0 \\ \hatQ_{r,12}^T & \hatQ_{r,22} & 0 \\ 0 & 0 & 0 \end{psmallmatrix}$.  
Plugging~\eqref{simplifyForsmallDt1} into the expressions in~\eqref{coposMhat} yields~\eqref{relateMhat2Mbar}, which proves the claim.
\end{proof}

We are now ready to show the equivalence between the EQLF conditions for the LCS~\eqref{lcseqlf} and the \dlcs~\eqref{feasP}.

\begin{theorem}
Suppose $\tComplCompl$ is a P-matrix. Then for all $\tstep$ sufficiently small, the LCS is exponentially stable using a EQLF if and only if the \dlcs\, obtained by the time-stepping formulation~\eqref{tslcs} is exponentially stable using a EQLF.
\end{theorem}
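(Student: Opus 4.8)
The plan is to mirror the proof of Theorem~\ref{thm:cqlfequiv} for the CQLF, using a \emph{single} matrix for both systems (identifying the \dlcs\ matrix $P$ with the LCS matrix $\tP$) and transferring the two strict-copositivity requirements of the EQLF one at a time: the positivity condition ($P \succ_{\setC} 0$ for the LCS versus $P \succ_{\setK} 0$ for the \dlcs) and the decrease condition ($-\hatQ(P) \succ_{\widehat{\setC}} 0$ versus $-\widehat{M}(P) \succ_{\widehat{\setK}} 0$). Each is handled by the matching pair of ``cone'' and ``matrix'' lemmas: Lemma~\ref{lemma:relatecones} with a perturbation estimate for the positivity condition, and Lemma~\ref{lemma:relateKhat2Kbar} together with Lemma~\ref{lemma:relateMhat2Mbar} for the decrease condition. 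Two features distinguish this argument from the CQLF case: the positivity condition is now cone-dependent (whereas $\tP_{xx} \succ 0$ was cone-free), and the bijection between $\widehat{\setK}$ and $\widehat{\setC}$ in Lemma~\ref{lemma:relateKhat2Kbar} is exact only up to the $o(\Delta t)$ defect $\widehat{h}$.

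For the decrease condition I would argue the ``only if'' direction (the converse being symmetric). Assume the LCS is exponentially stable via an EQLF, so $-\hatQ(P) \succ_{\widehat{\setC}} 0$ and hence $-(\tx,\lambda,\dirlambda)^T \hatQ(P)(\tx,\lambda,\dirlambda) \geq \gamma \|(\tx,\lambda,\dirlambda)\|^2$ for some $\gamma>0$ on $\widehat{\setC}$. Fix $(x,\lambda,\hatlambda) \in \widehat{\setK}$. By Lemma~\ref{lemma:relateMhat2Mbar},
\[
(x,\lambda,\hatlambda)^T \widehat{M}(P) (x,\lambda,\hatlambda) = \xi^T \left( \Delta t \cdot \hatQ(P) + \Delta t^2 \cdot \hatQ_r(P) \right) \xi, \quad \xi := \widehat{N}(x,\lambda,\hatlambda),
\]
and by Lemma~\ref{lemma:relateKhat2Kbar}(b) we may write $\xi = (\tx,\lambda,\dirlambda) + \tfrac{o(\Delta t)}{\Delta t}\,\widehat{h}$ with $(\tx,\lambda,\dirlambda) \in \widehat{\setC}$. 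Expanding, the leading term is $\Delta t \cdot (\tx,\lambda,\dirlambda)^T \hatQ(P)(\tx,\lambda,\dirlambda) \leq -\Delta t\,\gamma \|(\tx,\lambda,\dirlambda)\|^2$, while every remaining contribution---those from $\hatQ_r(P)$ and those involving $\widehat{h}$, whose norm is $O(\|\tx\|)=O(\|(\tx,\lambda,\dirlambda)\|)$---is of order $o(\Delta t)\cdot O(\|(\tx,\lambda,\dirlambda)\|^2)$. Choosing $\Delta t$ smaller than dictated by~\eqref{bndOnDeltat} so these are dominated, I obtain $-(x,\lambda,\hatlambda)^T\widehat{M}(P)(x,\lambda,\hatlambda) \geq \tfrac{\Delta t\,\gamma}{2}\|(\tx,\lambda,\dirlambda)\|^2 > 0$, and the norm equivalence Lemma~\ref{lemma:relateKhat2Kbar}(c) converts this into a positive multiple of $\|(x,\lambda,\hatlambda)\|^2$, yielding $-\widehat{M}(P) \succ_{\widehat{\setK}} 0$.

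For the positivity condition I would exploit that $N = \begin{psmallmatrix} A & C \\ 0 & I_{n_c} \end{psmallmatrix} = I + O(\Delta t)$, since $A = I_{n_x} + O(\Delta t)$ and $C = O(\Delta t)$ by~\eqref{tslcs2dlcs}. Given $P \succ_{\setC} 0$ with margin $\omega$ and $(x,\lambda) \in \setK$, Lemma~\ref{lemma:relatecones} gives $(\tx,\lambda) = N(x,\lambda) \in \setC$, so
\[
(x,\lambda)^T P (x,\lambda) = (\tx,\lambda)^T P (\tx,\lambda) - (x,\lambda)^T(N^TPN - P)(x,\lambda) \geq \left( \tfrac{\omega}{c_2^2} - O(\Delta t)\|P\| \right)\|(x,\lambda)\|^2,
\]
using $(\tx,\lambda)^T P (\tx,\lambda) \geq \omega\|(\tx,\lambda)\|^2 \geq (\omega/c_2^2)\|(x,\lambda)\|^2$ via Lemma~\ref{lemma:relatecones}(c) and $\|N^TPN - P\| = O(\Delta t)\|P\|$. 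For $\Delta t$ small this is strictly positive, so $P \succ_{\setK} 0$; the converse is identical because $N^{-1} = I + O(\Delta t)$. Combining the positivity and decrease transfers in both directions, and invoking Theorem~\ref{thm:EQLF} and the LCS conditions~\eqref{lcseqlf}, establishes the claimed equivalence for all sufficiently small $\Delta t$.

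The main obstacle is the bookkeeping in the decrease step. Unlike the CQLF case, the map $\widehat{N}$ carries a block of order $1/\Delta t$ and the cone correspondence carries the $o(\Delta t)$ defect $\widehat{h}$ scaled by $o(\Delta t)/\Delta t$; one must verify that, after the factorization of Lemma~\ref{lemma:relateMhat2Mbar}, every cross term and every residual term is genuinely $o(\Delta t)$ relative to the $\Delta t$-scale leading term $\Delta t\,\hatQ(P)$, and that the explicit $\Delta t$ in the lower bound of Lemma~\ref{lemma:relateKhat2Kbar}(c) does not cancel the negativity gained. As in the CQLF proof, the threshold on $\Delta t$ may be fixed after $P$ and its copositivity margins $\gamma,\omega$ are chosen.
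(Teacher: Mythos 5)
Your proposal is correct and follows essentially the same route as the paper: the paper likewise pre- and post-multiplies the factorization of Lemma~\ref{lemma:relateMhat2Mbar} by $(x,\lambda,\hatlambda) \in \widehat{\setK}$, uses Lemmas~\ref{lemma:relatehatlambda} and~\ref{lemma:relateKhat2Kbar} to pass between $\widehat{\setK}$ and $\widehat{\setC}$ while absorbing the $\frac{o(\Delta t)}{\Delta t}\widehat{h}$ defect, chooses $\Delta t$ small enough that the residual and cross terms are dominated by the leading $\Delta t \cdot \hatQ$ term, and converts norms via Lemma~\ref{lemma:relateKhat2Kbar}(c), treating the converse symmetrically. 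Your explicit transfer of the positivity condition between $P \succ_{\setC} 0$ and $P \succ_{\setK} 0$ via $N = I + O(\Delta t)$ is a small completeness addition that the paper's proof leaves implicit, but it does not constitute a different approach.
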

\begin{proof}
Pre-multiply and post-multiply the equation in~\eqref{relateMhat2Mbar} by $(x,\lambda,\hatlambda) \in \widehat{\setK}$ to obtain
\begin{align}
	&\; \begin{pmatrix} x \\ \lambda \\ \hatlambda \end{pmatrix}^T \widehat{M} \begin{pmatrix} x \\ \lambda \\ \hatlambda 
	\end{pmatrix} \nonumber \\
	= &\; 
	\left( \begin{pmatrix} \tx \\ \lambda \\ \dirlambda \end{pmatrix} + \frac{o(\tstep)}{\tstep} \widehat{f}(\tx) \right)^T 
	\left( \tstep \cdot \hatQ + \tstep^2 \cdot \hatQ_r \right) 
	\left( \begin{pmatrix} \tx \\ \lambda \\ \dirlambda \end{pmatrix} + \frac{o(\tstep)}{\tstep} \widehat{f}(\tx) \right) 
	\nonumber \\
	= &\; \tstep \begin{pmatrix} \tx \\ \lambda \\ \dirlambda \end{pmatrix}^T \hatQ 
	\begin{pmatrix} \tx \\ \lambda \\ \dirlambda \end{pmatrix} + o(\tstep) 
	\begin{pmatrix} \tx \\ \lambda \\ \dirlambda \end{pmatrix}^T \hatQ \widehat{f}(\tx) + \frac{o(\tstep)^2}{\tstep}
	\widehat{f}(\tx)^T \hatQ \widehat{f}(\tx) \nonumber \\
	&\; + \tstep^2 	
	\left( \begin{pmatrix} \tx \\ \lambda \\ \dirlambda \end{pmatrix} + \frac{o(\tstep)}{\tstep} \widehat{f}(\tx) \right)^T 
	\hatQ_r 
	\left( \begin{pmatrix} \tx \\ \lambda \\ \dirlambda \end{pmatrix} + \frac{o(\tstep)}{\tstep} \widehat{f}(\tx) \right) 
	\label{relateMhat2Mbar1}
\end{align}
where $\tx = Ax + \ComplState \lambda$.  The first equality follows from Lemma~\ref{lemma:relateMhat2Mbar} for all $\tstep$ sufficiently small.  Further by Lemma~\ref{lemma:relateKhat2Kbar}, we have that $(\tx,\lambda,\dirlambda) \in \widehat{\setC}$.

Consider the only if part of the claim.  Suppose LCS is exponentially stable using an EQLF.  Since $-\widehat{Q} \succ_{\widehat{\cal C}} 0$ there exists $\gamma > 0$ such that $-( \tx,  \lambda, \dirlambda)^T \hatQ (\tx, \lambda, \dirlambda) \geq \gamma \|( \tx, \lambda, \dirlambda )\|^2$.  Then for all $\tstep$ sufficiently small the norm of the sum of the terms other than the leading term on the right hand side of~\eqref{relateMhat2Mbar1} can be made smaller than or equal to $(\gamma \tstep)/2 \|(\tx,\lambda,\dirlambda)\|^2$.  Then for all for such $\tstep$ sufficiently small we have from~\eqref{relateMhat2Mbar1} and Lemma~\ref{lemma:relateKhat2Kbar}(c) that 
\[ -(x, \lambda, \hatlambda )^T \widehat{M} ( x, \lambda, \hatlambda) \geq (\gamma \tstep)/2 \|( \tx, \lambda, \dirlambda )\|^2 \geq (\gamma \tstep)/(2 c_4) \|(x,\lambda,\hatlambda)\|^2. \] 
Thus, $-\widehat{M} \succ_{\widehat{\cal K}} 0$ proving the only if part of the claim.

The proof of the if part of the claim can be obtained in a similar manner.  This completes the proof.
\end{proof}

\section{Conclusions \& Future Work}\label{sec:conclusions}

In this work, we derived sufficient conditions for the Lyapunov stability of a Discrete-Time Linear Complementarity System (DLCS), using both common and extended quadratic Lyapunov functions. We also showed the equivalence between the stability conditions of a Linear Complementarity System and its discrete-time analog for all sufficiently small time-steps.  We proposed a cutting plane algorithm to find a Lyapunov function verifying exponential stability by separating points from nonconvex copositive cones, and the algorithm was demonstrated on small \dlcs\, instances.  
The current investigation opens up a number of avenues for future exploration.
\begin{itemize}
\item First, the cutting plane algorithm can be slow on large instances of the \dlcs. For large instances, the algorithm often requires performing many rounds of cuts before terminating.  Improving the behavior of the cutting plane algorithm is an important future direction of this work. 
\item Another natural area of improvement for this approach is that a Lyapunov function is only found upon termination of the algorithm.  In other words, we do not yet have a heuristic approach for constructing a feasible matrix $P_{xx}$ using information obtained from the cutting plane method.  Such an approach can be advantageous since for stability, we are only interested in a feasible solution satisfying the copositivity conditions.  
\item The incorporation of feedback control and the joint computation of stabilizing controller and Lyapunov function is another avenue for investigation.  The inclusion of feedback controls complicates the separation problem since the matrices in the LCP constraints are now dependent on the feedback matrices.  This necessitates the development of a different algorithm for the computation of Lyapunov function.
\end{itemize}

\appendix

\section{S-Lemma Formulations for CQLF and EQLF}\label{sec:appendix}

\subsection{Common Quadratic Lyapunov Function}

The conditions for exponential stability in Theorem~\ref{thm:CQLF}$(iii)$ can be written as the feasibility problem~\eqref{feasPxx}.  
Moreover, the inequality in~\eqref{feasPxx:setK2} can be relaxed into a linear matrix inequality using the S-Lemma~\cite{BoydLMI}.  Consider the matrix $H \in \R^{(n_x + n_c)}$ defined as
\begin{subequations}
\begin{equation}
	H = \begin{pmatrix} \StateCompl & \ComplCompl \\ 0 & I_{n_c} \end{pmatrix}.
\end{equation}
Consider the Linear Matrix Inequality (LMI) 
\begin{align}
M + H^T WH \prec 0 
\text{ where } W \in \Sy^{(n_x+n_c)}, W \geq 0. \label{feasPxxlmi} 
\end{align}
Let $\conv (\gr\,\sol{\StateCompl x}{\ComplCompl})$ denote the convex hull of $\gr\,\sol{\StateCompl x}{\ComplCompl}$, i.e. 
\[
\conv (\gr\,\sol{\StateCompl x}{\ComplCompl}) = \{ (x,\lambda) \,|\, \lambda \geq 0,\, \StateCompl x + \ComplCompl \lambda \geq 0 \}.
\]

We show that satisfaction of~\eqref{feasPxxlmi} implies that $\psi(x,\lambda) < 0$ for all $(x,\lambda) \in \conv ( \gr\,\sol{\StateCompl x}{\ComplCompl} )$. Thus, we obtain a relaxation of the copositivity conditions in~\eqref{feasPxx} to a LMI~\eqref{feasPxxlmi}.  Multiplying the matrix inequality in~\eqref{feasPxxlmi} from the left and right by $(x,\lambda) \in \conv(\gr\,\sol{\StateCompl x}{\ComplCompl})$ we obtain 
\begin{align}
	&\begin{pmatrix} x \\ \lambda \end{pmatrix}^T \left( M  + H^T WH \right) 
	\begin{pmatrix} x \\ \lambda \end{pmatrix} \nonumber 
	< 0 \\
	\implies& \psi(x,\lambda) + 
	\begin{pmatrix} \StateCompl x + \ComplCompl\lambda \\ \lambda \end{pmatrix}^T 
	W 
	\begin{pmatrix} \StateCompl x + \ComplCompl\lambda \\ \lambda \end{pmatrix} < 0 \nonumber \\	
	\implies& \psi(x,\lambda) < 0 
	\label{cqlf.imp}
\end{align}
where the second implication follows from $(x,\lambda) \in \conv(\gr\,\sol{\StateCompl x}{\ComplCompl})$ and $W_{xx}, W_{\lambda\lambda} \geq 0$.  Hence, satisfaction of~\eqref{feasPxxlmi} $\implies$ \eqref{feasPxx:setK2}.  The computation of a $P_{xx}$ satisfying~\eqref{feasPxx} can be cast as the LMI system
\begin{align}
\text{Find} &\; P_{xx} \in \Sy^{n_x}, W \in \Sy^{n_x+n_c} \\
\text{ such that} &\; 	P_{xx} \succ 0, W \geq 0, \eqref{feasPxxlmi}. 
\end{align}
\end{subequations}

\subsection{Extended Quadratic Lyapunov Function}

\begin{subequations}
Similar to the use of S-Lemma for the CQLF the copositivity conditions in~\eqref{feasP} can be turned into a feasibility problem involving LMIs.  Consider the matrices $J, J' \in \R^{(n_x+n_c) \times (n_x+2n_c)}$, defined as
\begin{equation}
	J = \begin{pmatrix} \StateCompl & \ComplCompl & 0 \\ 0 & I_{n_c} & 0 \end{pmatrix},\,
	J' = \begin{pmatrix} \StateCompl A & \StateCompl \ComplState & \ComplCompl \\ 0 & 0 & I_{n_c} \end{pmatrix}.
\end{equation}
It can be verified that 
\begin{equation}
\left. \begin{aligned}
W_1  \in \Sy^{n_x+n_c},  W_1 \geq 0 \\
P  - H^T {W}_1H \succ 0 
\end{aligned} \right\} \implies \eqref{feasP:setK1} 
\label{eqlf.imp1}
\end{equation}
\begin{equation}
\left. \begin{aligned}
W_2, {W}_3 \in \Sy^{(n_x+n_c)}, W_2, W_3 \geq 0 \\
\widehat{M}  + J^T W_2J + (J')^TW_3(J') \prec 0 
\end{aligned} \right\} \implies \eqref{feasP:setK2}.
\label{eqlf.imp2}
\end{equation}
\end{subequations}
Note that~\eqref{eqlf.imp1} can be derived in a manner similar to that in~\eqref{cqlf.imp}.  
We derive~\eqref{eqlf.imp2} in the following. Multiply the matrices in the left hand side of~\eqref{eqlf.imp2} by $(x,\lambda,\hatlambda) \in \widehat{\setK}$ and rearrange to obtain
 \[\begin{aligned}
     & \begin{pmatrix} x \\ \lambda \\ \hatlambda \end{pmatrix}^T 
     \left( \widehat{M}  + J^T W_2J + (J')^TW_3(J') \right)
     \begin{pmatrix} x \\ \lambda \\ \hatlambda \end{pmatrix} < 0 \\
\implies & \widehat{\psi}(x,\lambda,\hatlambda) + 
    \begin{pmatrix} \StateCompl x + \ComplCompl \lambda \\ \lambda \end{pmatrix}^T W_2 
    \begin{pmatrix} \StateCompl x + \ComplCompl \lambda \\ \lambda \end{pmatrix}    + \\
    & 
    \begin{pmatrix} \StateCompl A x + \StateCompl \ComplState \lambda + \ComplCompl \hatlambda \\ \hatlambda \end{pmatrix}^T W_3 
    \begin{pmatrix} \StateCompl A x + \StateCompl \ComplState \lambda + \ComplCompl \hatlambda \\ \hatlambda \end{pmatrix} < 0 \\
\implies & \widehat{\psi}(x,\lambda,\hatlambda) < 0    
 \end{aligned}
 \]
where the second implication follows from $\lambda \in \sol{\StateCompl x}{\ComplCompl})$, $\hatlambda \in \sol{\StateCompl Ax + \StateCompl \ComplState \lambda}{\ComplCompl}$ and $W_{2}, W_3 \geq 0$.  This proves the implication in~\eqref{eqlf.imp2}.  Just as in the case of the CQLF we can show that the LMI in~\eqref{eqlf.imp2} implies that $\widehat{\psi}(x,\lambda,\hatlambda) < 0$ for all $(x,\lambda,\hatlambda) \in \conv{\widehat{\setK}}$.

\bibliographystyle{plain}
\bibliography{lcs_refs, hybridmpc_refs}

\end{document}